\documentclass[11pt,a4paper]{amsart}
\usepackage[T1]{fontenc}
\usepackage{lmodern,amsmath,amssymb,amsthm}
\usepackage[margin=1in]{geometry}
\usepackage[expansion=false]{microtype}
\usepackage[hidelinks]{hyperref}
\usepackage{bookmark}
\allowdisplaybreaks[2]
\numberwithin{equation}{section}
\newtheorem{theorem}{Theorem}[section]
\newtheorem{proposition}[theorem]{Proposition}
\newtheorem{lemma}[theorem]{Lemma}

\theoremstyle{definition}
\newtheorem{definition}[theorem]{Definition}

\newcommand{\R}{\mathbb R}
\newcommand{\eps}{\varepsilon}
\newcommand{\dd}{\,\mathrm d}
\newcommand{\ii}{\mathrm i}
\newcommand{\Imn}{\operatorname{Im}}
\newcommand{\Ren}{\operatorname{Re}}
\newcommand{\supp}{\operatorname{supp}}
\newcommand{\Div}{\operatorname{div}}
\newcommand{\BB}{\mathcal B}
\newcommand{\Kcal}{\mathcal K}

\title[Global well-posedness and scattering]
{Global well-posedness and scattering for the three-dimensional focusing energy-critical NLS}
\author{Qingtang Su}
\address{Qingtang Su
\newline \indent Academy of Mathematics and Systems Science, Chinese Academy of Sciences, Beijing, China.
\newline \indent Morningside Center of Mathematics, Beijing, China.}
\email{suqingtang@amss.ac.cn}

\author{Zehua Zhao}
\address{Zehua Zhao
\newline \indent School of Mathematics and Statistics, Beijing Institute of Technology,
\newline \indent Key Laboratory of Algebraic Lie Theory and Analysis, Ministry of Education,
Beijing 100081, China}
\email{zzh@bit.edu.cn}
\hypersetup{pdftitle={Global well-posedness and scattering for the three-dimensional focusing energy-critical NLS},
pdfauthor={Qingtang Su and Zehua Zhao},
pdfsubject={Scattering below the ground state for the three-dimensional focusing energy-critical NLS}}
\begin{document}
\raggedbottom
\begin{abstract}
We prove global well-posedness and scattering for the three-dimensional
focusing energy-critical NLS below the ground-state energy and gradient
thresholds, confirming the threshold conjecture in dimension three. Inspired
by our work on the three-dimensional defocusing cubic NLS in \(H^s(\R^3)\),
\(s>\frac{1}{2}\), we derive frequency-localized \(L^4_{t,x}\) estimates from
an interaction Morawetz identity without assuming finite mass. We adapt the
interaction weight to the projected density to obtain a positive quartic
spacetime term in the focusing case, and exploit nonlinear cancellations to
absorb the frequency-truncation errors into this term.
\end{abstract}

\maketitle

\section{Introduction}\label{sec:intro}

\subsection{The main result}\label{subsec:intro-main}
In this paper, we study the three-dimensional focusing energy-critical
nonlinear Schr\"odinger equation
\begin{equation}\label{eq:nls}
 (\ii\partial_t+\Delta)u=-|u|^4u,
 \qquad u(0)=u_0\in\dot H^1(\R^3).
\end{equation}
The equation is energy-critical because the scaling
$u_\lambda(t,x)=\lambda^{\frac{1}{2}}u(\lambda^2t,\lambda x)$ leaves invariant
both the equation and the $\dot H^1(\R^3)$ norm of the initial data.
The critical local theory of Cazenave--Weissler \cite{CW} gives local
well-posedness in this space and scattering for small data; see
\cite{KM,TaoBook} for the stability theory. A strong solution belongs
to $C_t\dot H^1_x\cap L^{10}_{t,x,\mathrm{loc}}$ and satisfies the
Duhamel formula. Finiteness of its $L^{10}_{t,x}$ norm on the maximal
lifespan implies global existence and scattering. The large-data
problem is therefore to obtain long-time spacetime control.

For the defocusing equation, global well-posedness and scattering hold
for arbitrary energy-space data. Bourgain \cite{Bourgain} and Tao
\cite{TaoRadial} developed the radial theory, and Grillakis
\cite{Grillakis} proved global existence for smooth radial data.
Colliander, Keel, Staffilani, Takaoka and Tao \cite{CKSTT} settled the
general case in dimension three, Ryckman--Visan \cite{RV} in dimension
four and Visan \cite{Visan} in higher dimensions.
Analogous global results hold for the defocusing energy-critical wave
equation \cite{GrillakisWave,SS,BS}.

For the focusing equation, the stationary ground state prevents
scattering for arbitrary data:
\[
 W(x)=(1+\frac{|x|^2}{3})^{-\frac{1}{2}},\qquad -\Delta W=W^5.
\]
The flow conserves the energy
$E(v)=\frac{1}{2}\|\nabla v\|_2^2-\frac{1}{6}\|v\|_6^6$.
The ground state attains the sharp Sobolev inequality
\cite{Aubin,Talenti}: with $G=\|\nabla W\|_2^2=\|W\|_6^6$,
\begin{equation}\label{eq:sharp}
 \|f\|_6^6\le G^{-2}\|\nabla f\|_2^6,
 \qquad E(W)=\frac{G}{3}.
\end{equation}
The scattering conjecture asserts that solutions below both the energy
and gradient thresholds of $W$ are global and scatter; see \cite{KM,KVfocus}.
Inspired by our bilinear interaction estimates for the three-dimensional
defocusing cubic NLS in $H^s$, $s>\frac{1}{2}$ \cite{SZ}, we prove this conjecture
in dimension three. We construct a frequency-localized interaction whose
principal term is coercive below the ground state. The projected source is
then absorbed into the resulting $L^4_{t,x}$ integral without assuming finite
mass.

\begin{theorem}\label{thm:main}
Suppose
\begin{equation}\label{eq:threshold}
 u_0\in\dot H^1(\R^3),\qquad E(u_0)<E(W),\qquad
 \|\nabla u_0\|_2<\|\nabla W\|_2.
\end{equation}
Then the unique maximal strong solution of \eqref{eq:nls} is global,
belongs to
\[
 C(\R;\dot H^1)\cap L^{10}_{t,x}(\R\times\R^3),
\]
and scatters in both time directions: there exist
$u_\pm\in\dot H^1(\R^3)$ such that
\[
 \lim_{t\to\pm\infty}\|u(t)-e^{\ii t\Delta}u_\pm\|_{\dot H^1}=0.
\]
\end{theorem}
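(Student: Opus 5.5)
The plan follows the Kenig--Merle concentration-compactness/rigidity scheme, with the rigidity step carried out through a frequency-localized interaction Morawetz estimate, as the abstract announces, rather than a localized virial argument. This matters because the virial argument, in dimension three, only handles radial data.

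\textbf{Variational step.} From \eqref{eq:sharp} and the hypotheses \eqref{eq:threshold}, the standard energy-trapping argument gives a $\delta>0$ with
\[
\sup_t\|\nabla u(t)\|_2^2\le(1-\delta)G,
\qquad
\|\nabla u(t)\|_2^2-\|u(t)\|_6^6\ge c_\delta\|\nabla u(t)\|_2^2
\]
on the whole lifespan. In particular the energy is coercive, $E(u)\sim\|\nabla u\|_2^2$.

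\textbf{Reduction to an almost periodic solution.} Suppose scattering fails for some data below threshold. Linear profile decomposition in $\dot H^1$ together with the stability theory \cite{KM,TaoBook} produces a minimal-energy blowup solution $u_c$ satisfying the same bounds. This solution is almost periodic modulo symmetries, with parameters $N(t)$ and $x(t)$. The usual further reduction lets us assume $N(t)\ge 1$ on $[0,T_{\max})$, and we split into the finite-time blowup case and the soliton-like case.
\begin{itemize}
\item In the finite-time case, $N(t)\to\infty$. Compactness forces the mass on bounded balls to vanish, which yields $u\equiv0$ in the standard way, since $\dot H^1\subset L^2_{\mathrm{loc}}$.
\item In the soliton-like case, $\int_I N(t)^{-1}\,dt$ (or an appropriate count of intervals) tends to infinity. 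A frequency-localized $L^4_{t,x}$ bound growing sublinearly in this quantity then gives a contradiction.
\end{itemize}

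\textbf{Rigidity step.} The core is an interaction Morawetz estimate for $Iu=P_{\le K}u$ (or $P_{\ge N}u$, depending on the normalization). The unprojected argument is unavailable because $u\notin L^2$, so we work with the projected density $|Iu|^2$, which has finite mass after low-frequency truncation. The interaction weight $a(x-y)=|x-y|$ would produce
\[
\int\!\!\int|Iu(t,x)|^2\,|Iu(t,y)|^2\,\dd x\dd y\ \text{-type terms}
\]
plus a nonlinear term. In the focusing case that nonlinear term has the wrong sign. The remedy is to replace the weight, or combine the linear and nonlinear contributions, so that the principal term becomes something like $\int|\nabla(\overline{Iu}\,Iu_y)|^2-\int|Iu|^6$-type quantities. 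Using the coercivity above applied to the tensor product, in the style of the Planchon--Vega formulation, these are bounded below by $c\|Iu\|_{L^4_{t,x}}^4$. The quantities to track are:
\begin{itemize}
\item the Morawetz action, bounded by $\sup_t\|Iu\|_2^2\,\|Iu\|_{\dot H^{1/2}}^2$, which is controlled via Bernstein and almost periodicity;
\item the truncation errors $\mathcal N(Iu)-I\mathcal N(u)$.
\end{itemize}

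\textbf{Main obstacle.} The hardest step is absorbing the commutator errors $I(|u|^4u)-|Iu|^4Iu$ into the positive quartic term, without finite mass and in the focusing sign. I would expand these errors by paraproducts. The terms in which all factors are low frequency cancel exactly, which is the source of the nonlinear cancellations. Terms with at least one high-frequency factor would be estimated with long-time Strichartz / bilinear Strichartz bounds adapted to $N(t)$, giving smallness $\eta\|Iu\|_{L^4}^4+o(\int N(t)^{-1})$. Closing this bootstrap is where I expect the real difficulty. It requires choosing the truncation level relative to $K=\int_I N(t)^{-1}\dd t$ and checking that the errors are $o(K)$. Once it closes, the contradiction rules out $u_c$, so every solution under \eqref{eq:threshold} has finite $L^{10}_{t,x}$ norm, and global existence and scattering follow from the local theory.
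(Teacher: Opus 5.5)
Your case analysis has a genuine gap, and your rigidity step is left open at exactly the points where the paper's work lies.

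\textbf{The missing case.} After excluding finite-time blowup, you assume that a forward-global critical element has $\int_0^\infty N(t)^{-1}\dd t=\infty$. But the reduction with $N(t)\ge1$ also allows a forward-global \emph{rapid frequency cascade}. In this scenario the characteristic intervals satisfy $|J_k|\sim N_k^{-2}$ with $\sum_kN_k^{-2}=\infty$ but $\sum_kN_k^{-3}<\infty$; for instance $N_k\sim k^{1/2}$. Neither of your arguments reaches this case.
\begin{itemize}
\item Your localized-mass argument needs $T_{\max}<\infty$. Since $\int\phi_R|u|^2$ has time derivative $O(B^2)$, its vanishing as $t\to T_{\max}$ only yields $\|u(t_0)\|_2^2\lesssim B^2(T_{\max}-t_0)$.
\item A bound $\int_0^\infty\!\int|P_{>\nu}u|^4\lesssim\nu^{-3}$ contradicts nothing when $K_N([0,\infty))<\infty$.
\end{itemize}
The paper treats this scenario separately. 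It first proves $u\in L^\infty_tL^4_x$ (Lemma~\ref{lem:l4}). Selecting terminal times $T_j$, it then shows $\|F(u)\|_{L^2_tL^1_x([0,T_j])}^2\lesssim_uK_a([0,T_j])$ (Proposition~\ref{prop:good-time}), so the Morawetz estimate gives $F(u)\in L^2_tL^1_x([0,\infty))$. The no-waste formula and dual endpoint Strichartz then give $\|u(t)\|_2\lesssim n^{1/2}\|F(u)\|_{L^2_tL^1_x((t,\infty))}+B/n$. This yields finite mass and then zero mass (Lemma~\ref{lem:forcing-rigidity}). You need some argument of this kind.

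\textbf{The rigidity step.} The mechanism you suggest for the focusing term does not work as stated.
\begin{itemize}
\item Decompose the interaction kernel into directions. The gradient and focusing terms then weight $|\partial_\omega f|^2-\frac13|f|^6$ at $x$ by the plane marginal of $\rho$ at $x\cdot\omega$. For the weight $|x-y|$, this marginal varies on the concentration scale. So the global, isotropic energy-trapping inequality cannot be inserted, and nothing you describe gives $c\|P_{>\nu}u\|_4^4$ with constants independent of $\|P_{>\nu}u\|_2$.
\item The paper first proves a \emph{directional} inequality $\|\partial_\omega f\|_2^2-\frac13\|f\|_6^6\ge\sigma>0$, via anisotropic stretching and sharp Sobolev.
\item It then uses a kernel $a_R$ averaged from the one-dimensional Green function $e^{-|s|/R}/(2R)$, with $R\gg N(t)^{-1}$. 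This makes the weight nearly constant on the compact core, and a weighted Sobolev estimate handles the tail.
\item The resulting density term $Q_R$ is weaker than $\|h\|_4^4$. The paper therefore fixes a time-dependent radius by $C(R,\rho)=\kappa R^4$, so that $R\sim\|h\|_4^4$. It controls the new term $R'M_\psi$ from below by completing a square.
\end{itemize}

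\textbf{Error absorption.} You flag this as unresolved, and your proposed cancellation is not available. With the truncation $h=P_{>\nu}u$ forced by $N(t)\ge1$, there is no exact all-low cancellation, since $P_{>\nu}F(P_{\le\nu}u)\neq0$. The paper instead uses three ingredients:
\begin{itemize}
\item the bracket identity $\Imn(G_1\bar h)=-\Imn(F(u)\bar l)$ and its momentum analogue;
\item the bounds $\|l\|_\infty\lesssim\nu\mathcal H_+f$ and $\|\nabla l\|_\infty\lesssim\nu^2\mathcal H_+f$ from the no-waste formula, which supply the small factors $d_\nu=\nu M_\nu$;
\item long-time Strichartz, applied only to $P_{\le\nu}F(u)$ after writing $h=\Div\nabla\Delta^{-1}h$.
\end{itemize}
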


Both conditions in \eqref{eq:threshold} are sharp. Finite-variance
data with $E(u_0)<E(W)$ and $\|\nabla u_0\|_2>\|\nabla W\|_2$ blow up
in finite time \cite{KM}. At energy $E(W)$, Duyckaerts--Merle
\cite{DMerle} constructed the radial solution $W^-$, which has gradient
norm below that of $W$ but converges to $W$ and does not scatter forward
in time.

\subsection{Previous results and the low-frequency problem}
\label{subsec:intro-background}
Under \eqref{eq:threshold}, energy conservation and the sharp Sobolev
inequality \eqref{eq:sharp} give energy trapping: the gradient norm
remains uniformly below $\|\nabla W\|_2$, and
\[
 \|\nabla u(t)\|_2^2-\|u(t)\|_6^6\ge c(u_0)\|\nabla u(t)\|_2^2.
\]
Kenig--Merle \cite{KM} used this coercivity to prove global well-posedness
and scattering for radial data in dimensions three, four and five.
Their concentration-compactness argument uses Keraani's profile
decomposition \cite{Keraani} to reduce failure of scattering to the
existence of a nonzero critical element with precompact orbit modulo
symmetries. Radial symmetry fixes its spatial center, so they can exclude
this critical element by a localized virial argument without assuming
finite mass.

Removing radial symmetry from this virial argument requires control
of the moving center. For NLS, a Galilean boost replaces $\nabla u$
at the initial time by $\nabla u+2\pi\ii\xi u$, which need not lie in
$L^2$ for $u\in\dot H^1$. Normalizing the momentum by a boost therefore
requires additional low-frequency control. For the wave equation,
the momentum $\int u_t\nabla u$ is already finite in
$\dot H^1\times L^2$. Kenig--Merle \cite{KMWave} used Lorentz invariance
and finite propagation speed to establish the zero-momentum property
needed for nonradial rigidity, proving the corresponding result in dimensions
three to five.

For NLS in dimensions five and higher, Killip--Visan \cite{KVfocus}
obtain the missing low-frequency control through additional decay
and a double Duhamel argument,
which give negative regularity and hence finite mass for the critical
element. These bounds exclude frequency cascades and permit Galilean
boosts. In the soliton case, negative regularity and almost periodicity
give $L^2$ compactness, while minimality gives zero momentum. Together
these yield sublinear motion of the center and permit a truncated
virial argument. The dimension restriction comes from the double
Duhamel argument \cite[Section 6]{KVfocus}: the forward and backward
formulas at $t+s$ and $t-\tau$ have propagator separation $s+\tau$.
At unit frequency, the absolute dispersive majorant leads to the model
\begin{equation}\label{eq:dim}
 \int_0^T\!\!\int_0^T(1+s+\tau)^{-\frac{d}{2}}\dd s\dd\tau
 \sim
 \begin{cases}
  1,&d\ge5,\\
  \log T,&d=4,\\
  T^{\frac{1}{2}},&d=3,
 \end{cases}
 \qquad T\ge2.
\end{equation}
This describes the summability of the dispersive majorant, not the
mass of a critical element.\footnote{The same rates occur for
$\int_{|x|\le\sqrt T}|W_d(x)|^2\dd x$, since the $d$-dimensional
ground state satisfies $W_d(x)\sim|x|^{2-d}$ at infinity.}
In the borderline dimension four, Dodson \cite{Dodson4} proved the
theorem by combining long-time Strichartz and localized double Duhamel
estimates with a localized interaction Morawetz argument. In dimension
three, the power loss in \eqref{eq:dim} prevents the direct
absolute-value estimate from providing the required low-frequency
control. Earlier nonradial results include the $H^1$ theory under a
localized mass-flux sign condition of Han \cite{Han}, and the rigidity
theorem of Chung--Han \cite{CH} for finite-mass almost-periodic solutions
whose frequency scale is bounded above and below.

\subsection{Outline of the proof}\label{subsec:intro-proof}
To treat the remaining case without finite mass, we use an interaction
depending only on $x-y$. This removes the spatial center from the action;
we must then control the low-frequency dependence after projection.
If Theorem~\ref{thm:main} fails, we apply the standard
concentration-compactness reduction and exclude finite-lifespan critical
elements \cite{KM,KVfocus,KVquintic}. Proposition~\ref{prop:reduction}
then gives a nonzero forward-global almost-periodic solution $u$ with
frequency scale $N(t)\ge1$ and spatial center $x(t)$.
We set $B=\sup_{t\ge0}\|\nabla u(t)\|_2$. Almost periodicity means that
the normalized orbit
\[
 \Kcal=\left\{N(t)^{-\frac{1}{2}}u\left(t,x(t)+\frac{\cdot}{N(t)}\right):t\ge0\right\}
\]
is precompact in $\dot H^1(\R^3)$.
The positive conserved energy gives
\[
 \inf_{f\in\Kcal}\|\nabla f\|_2^2\ge2E(u)>0.
\]
Precompactness and H\"older's inequality therefore give
\begin{equation}\label{eq:conc}
 \int_{|x-x(t)|\le R_0/N(t)}|u(t,x)|^2\dd x\gtrsim_u N(t)^{-2},
 \qquad \|u(t)\|_4^4\gtrsim_u N(t)^{-1}.
\end{equation}

In particular, $N(t)\sim1$ forces linear growth of
$\int_0^T\|u(t)\|_4^4\dd t$. The interaction Morawetz estimate controls precisely this integral
in the defocusing equation: with
$k(x)=|x|$, the action
$M_k(u)=2\iint|u(y)|^2\nabla k(x-y)\cdot\Imn(\bar u\nabla u)(x)\dd x\dd y$
satisfies $|M_k(u)|\lesssim\|u\|_2^3\|\nabla u\|_2$, while
$-\Delta^2|x|=8\pi\delta_0$ supplies $8\pi\|u\|_4^4$ in its
derivative. This gives
$\int_I\|u(t)\|_4^4\dd t\lesssim B\|u\|_{L^\infty_tL^2_x}^3$ in
the defocusing case. Two obstacles remain in the focusing case. The low-frequency
mass sum $\sum_{L\le1}L^{-2}\|\nabla P_Lu\|_2^2$ may diverge, and the
focusing derivative contains the negative term
\[
 -\frac{8}{3}\iint\frac{|u(x)|^6|u(y)|^2}{|x-y|}\dd x\dd y.
\]

Following Colliander et al.\ \cite{CKSTT} and Killip--Visan
\cite{KVquintic}, we truncate the low frequencies to make the interaction
Morawetz action finite. To carry out this approach in the focusing case,
we combine directional coercivity with an interaction radius determined
by the projected density to obtain a positive quartic spacetime term.
Exploiting cancellations in the projected quintic source, we then absorb the
projection errors into this term. More precisely, write
$F(u)=|u|^4u$, $h=P_{>\nu}u$, $l=P_{\le\nu}u$ and set
$M_\nu=\sup_t\|h(t)\|_2\lesssim B/\nu$,
$d_\nu=\nu M_\nu\to0$, $a(t)=\|h(t)\|_4^4$ and
$K_a(I)=\int_Ia(t)\dd t$; the limit for $d_\nu$ follows from
low-frequency compactness. For small fixed $\nu$, \eqref{eq:conc} still
gives $K_a(I)\gtrsim_u\int_IN(t)^{-1}\dd t$, while
$|M_k(h)|\lesssim_u\nu^{-3}$ for bounded $\nabla k$.
Proposition~\ref{prop:morawetz} proves the frequency-localized interaction
Morawetz estimate
\begin{equation}\label{eq:intro-closure}
 \|P_{>\nu}u\|_{L^4_{t,x}(I_j\times\R^3)}^4
 \le C_u\nu^{-3}
      +\varepsilon_u(\nu)\|P_{>\nu}u\|_{L^4_{t,x}(I_j\times\R^3)}^4,
 \qquad \varepsilon_u(\nu)\to0.
\end{equation}
Here $I_j=[0,T_j]$. The increasing sequence $T_j\to\infty$ is
independent of $\nu$, and the constants are uniform in $j$ and small $\nu$. Fixing $\nu$ so that
$\varepsilon_u(\nu)\le\frac{1}{2}$ and then letting $j\to\infty$ gives
$\int_0^\infty\!\int|P_{>\nu}u|^4\lesssim_u\nu^{-3}$. This contradicts
compactness whenever $\int_0^\infty N(t)^{-1}\dd t=\infty$, including
the case $N(t)\sim1$.

Together with a forcing comparison, this estimate also excludes the
remaining frequency scales. To control the projected source, we prove
in Proposition~\ref{prop:good-time} that
\begin{equation}\label{eq:intro-key}
 \int_{I_j}\|F(u(t))\|_1^2\dd t\lesssim_u
 \int_{I_j}\!\int|P_{>\nu}u(t,x)|^4\dd x\dd t=K_a(I_j).
\end{equation}
Both sides scale like $\lambda^{-3}$ under the critical rescaling
when we also replace $\nu$ by $\lambda\nu$.
Using this comparison on finite intervals, we first prove
\eqref{eq:intro-closure}. Once the Morawetz estimate is established, the
comparison yields $F(u)\in L^2_tL^1_x([0,\infty)\times\R^3)$. The no-waste Duhamel
formula then gives finite mass and, from the decay of the forcing
tail, zero mass (Lemma~\ref{lem:forcing-rigidity}). Thus the quartic
spacetime estimate excludes every nonzero critical element furnished
by the reduction.

The estimate \eqref{eq:intro-closure} follows from a single interaction
identity.
For a kernel $k_{R(t)}$, we write $M(t)=M_{k_{R(t)}}(h(t))$. Its derivative
splits into the spatial principal term $\mathcal B$, the projected
source $\mathrm{Err}$, and the radius contribution $D$:
\begin{equation}\label{eq:intro-identity}
 \begin{gathered}
 M'=\mathcal B+\mathrm{Err}+D,\qquad |M|\lesssim_u\nu^{-3},\\
 \mathcal B\ge c_u a(t),\qquad
 D\ge-\varepsilon_{\rm rad}(\nu)a(t),\qquad
 \left|\int_{I_j}\mathrm{Err}\right|
 \le\varepsilon_{\rm src}(\nu)[\nu^{-3}+K_a(I_j)],
 \end{gathered}
\end{equation}
where $\varepsilon_{\rm rad}(\nu)+\varepsilon_{\rm src}(\nu)\to0$.
After integrating, we absorb the errors proportional to $K_a(I_j)$
and combine the $\nu^{-3}$ remainder with the endpoint bound.

We must prove these estimates for the same kernel and fix its parameters
before $\nu$, independently of $I_j$. In our cubic argument \cite{SZ},
we pair high-frequency mass with an auxiliary low-frequency energy
density. This suggests estimating the projected source by the spacetime
integral that the interaction itself produces. For the
quintic nonlinearity, critical scaling removes the inverse-frequency gain
available for cubic terms. Here small coefficients come from the
low-frequency factors and the cancellations in the mass and momentum
source brackets.

To obtain this low-frequency control, we first improve spatial
integrability. Since $W\in L^q(\R^3)$ for $q>3$, we seek an $L^4_x$ bound. A Littlewood--Paley recurrence and
the refined Sobolev inequality \cite{GMO} give
\[
 u\in L^\infty_tL^4_x,\qquad
 f(t):=\|F(u(t))\|_1\le\|u(t)\|_4^2\|u(t)\|_6^3\lesssim_u1.
\]
The stationary solution $W$ also satisfies these bounds, so they do
not alone give time integrability. At each fixed output frequency, the dispersive bound for a single
Duhamel integral is integrable in time:
$\int_0^\infty\min\{\nu^3,s^{-\frac{3}{2}}\}\dd s\sim\nu$,
despite the double-integral obstruction in \eqref{eq:dim}.
In Section~\ref{sec:lp}, we prove that, on the selected intervals,
\[
 \|l\|_{L^2_tL^\infty_x(I_j\times\R^3)}
 +\nu^{-1}\|\nabla l\|_{L^2_tL^\infty_x(I_j\times\R^3)}
 \lesssim_u\nu K_a(I_j)^{\frac{1}{2}}.
\]
Pairing the derivative bound with $M_\nu^2$ gives
$M_\nu^2\nu^2=d_\nu^2\to0$, without requiring a quantitative decay
rate from compactness.

The strict threshold gap enters the spatial principal term. Uniform
smallness of $\|l\|_{\dot H^1}$ preserves the
strict thresholds for $h$. In normalized coordinates, put
$v(y)=N(t)^{-\frac{1}{2}}|h(t,x(t)+y/N(t))|$. Anisotropic scaling and
\eqref{eq:sharp} give, uniformly in the direction $\omega$,
\[
 \|\partial_\omega v\|_2^2-\frac{1}{3}\|v\|_6^6
 \ge c_u\|\partial_\omega v\|_2^2.
\]
The left-hand side vanishes at $v=W$, so the strict inequality excludes
the ground state. In Section~\ref{sec:static}, we insert the interaction
weight by averaging one-dimensional Green kernels. The weight is nearly
constant on the compact core. Outside this core, we use compactness,
Hardy's inequality, and the Green equation to control the weighted
potential term and the derivatives of the weight. The resulting coercivity retains a fixed fraction of the
positive density term $Q_R$ in \eqref{eq:Q}, with constants independent
of the projected mass.
The argument uses the directional bilinear virial framework of
Planchon--Vega \cite{PV}; its spatial part is related to the focusing
interaction Morawetz estimate of Dodson--Murphy \cite{DM} for the
$\dot H^{\frac{1}{2}}$-critical equation.

Returning to physical coordinates, we choose the interaction radius to follow
the density while preserving this positivity. For a convex kernel $k_R$ and an
integrable profile $\psi$ satisfying $\partial_R\nabla
k_R=\nabla[\psi(|\cdot|/R)]$, define $R(t)$ by
\begin{equation}\label{eq:intro-radius}
 C(R(t),\rho(t)):=\iint\rho(t,x)\rho(t,y)
 \psi(|x-y|/R(t))\dd x\dd y=\kappa R(t)^4,
 \qquad \rho=|h|^2.
\end{equation}
With $\kappa>0$ fixed sufficiently small, this gives
$R(t)\sim_u a(t)$, with $N(t)R(t)$ large enough for
the weighted coercivity. The radius equation then connects the
retained density term to the required quartic density:
\begin{equation}\label{eq:intro-weighted}
 \mathcal B_{k_R}(h)\gtrsim Q_R(h)
       \gtrsim_u R\gtrsim_u\|h\|_4^4.
\end{equation}
The same choice controls the radius contribution in
\eqref{eq:intro-identity}. To see the sign, we first consider the
continuity equation without a mass source. With
$\psi_R(x)=\psi(|x|/R)$, $M_\psi=M_{\psi_R}(h)$ and
$T_R=4\kappa R^3-\partial_R C>0$, differentiation gives
\[
 T_RR'=2M_\psi,\qquad
 D=R'M_\psi=\frac{2M_\psi^2}{T_R}\ge0.
\]
For the actual projected equation, we retain the mass source in this
identity and complete the square in Lemma~\ref{lem:signed-radius}.
The low-frequency bounds then give
$D\ge-\varepsilon_{\rm rad}(\nu)a(t)$.

With this kernel and radius, we now control the complete projected source
$\mathcal E=P_{>\nu}F(u)-F(h)=G_1-g$, where
$G_1=F(u)-F(h)$ and $g=P_{\le\nu}F(u)$. The mass bracket cancels to
$\Imn(G_1\bar h)=-\Imn(F(u)\bar l)$. In the momentum bracket,
integration by parts gives a difference of potential terms and
places the derivative on $l$. Combined with the low-frequency estimates, these identities give
$|\int_{I_j}\mathrm{Err}_{G_1}|\lesssim_u(d_\nu+d_\nu^2)K_a(I_j)$.
To treat $g$, which can contain interactions of comparable high frequencies,
we write $h=\operatorname{div}(\nabla\Delta^{-1}h)$ and integrate by parts. The long-time Strichartz estimate of Killip--Visan
\cite{KVquintic}, based on Dodson's method \cite{DodsonMass} and valid
for the focusing sign, controls $\nabla\Delta^{-1}h$. This yields,
uniformly for the same varying kernel,
\begin{equation}\label{eq:intro-source}
 \left|\int_{I_j}\mathrm{Err}_{k_R,\nu}(t)\dd t\right|
 \lesssim_u(\nu^{\frac{1}{4}}+d_\nu+d_\nu^2)
               [\nu^{-3}+K_a(I_j)].
\end{equation}
Combining this with the spatial coercivity, the radius bound and
$|M_{k_R}(h)|\lesssim_u\nu^{-3}$ gives
\eqref{eq:intro-closure}.

\subsection{Organization, notation, and conventions}
\label{subsec:intro-notation}
In Section~\ref{sec:reduction}, we give the critical-element reduction,
state the Morawetz estimate and derive the interaction identity.
We then prove the low-frequency bounds in Section~\ref{sec:lp} and
use them to establish coercivity and construct the radius in
Section~\ref{sec:static}. In Section~\ref{sec:interaction}, we use
these bounds to estimate the projected source, integrate the identity
and prove Proposition~\ref{prop:morawetz}, completing the rigidity argument.

All spatial norms and integrals are over $\R^3$, unless otherwise
specified, and $\|\cdot\|_p$ denotes the $L^p_x$ norm. We write
$X\lesssim Y$ when $X\le CY$ for a constant
independent of the parameters under consideration; subscripts indicate
allowed dependence. The notation $X\sim Y$ means that both
$X\lesssim Y$ and $Y\lesssim X$ hold.
Our Fourier transform is
\[
 \widehat f(\xi)=\int_{\R^3}e^{-2\pi\ii x\cdot\xi}f(x)\dd x.
\]
The same convention applies in lower-dimensional variables. Thus
$\widehat{\partial_j f}(\xi)=2\pi\ii\xi_j\widehat f(\xi)$, and
convolution with $K$ has multiplier $\widehat K$.
Fix a real radial function $\varphi\in C_c^\infty(\R^3)$ with
$0\le\varphi\le1$, $\varphi=1$ on $B(0,1)$, and
$\supp\varphi\subset B(0,2)$. The associated Littlewood--Paley projections are
\[
 \widehat{P_{\le L}f}(\xi)=\varphi(\frac{\xi}{L})\widehat f(\xi),\qquad
 P_{>L}=1-P_{\le L},\qquad P_L=P_{\le L}-P_{\le \frac{L}{2}}.
\]
All annular sums and suprema range over dyadic scales; fixed enlargements of
annuli are allowed. We sum over repeated spatial indices and omit
time intervals in spacetime norms when the context makes them clear.

\section{Reduction to an interaction Morawetz estimate}\label{sec:reduction}
We reduce Theorem~\ref{thm:main} to the quartic spacetime estimate in
Proposition~\ref{prop:morawetz}.

\subsection{The critical element and the no-waste formula}
Write
$A(f)=\|\nabla f\|_2^2$ and $B_6(f)=\|f\|_6^6$.
By energy trapping \cite[Theorem 3.9 and Corollary 3.13]{KM},
there is a constant $\gamma>0$ such that
\begin{equation}\label{eq:trapping}
 \begin{gathered}
 A(u(t))\le B^2\le(1-\gamma)G<G,\\
 A(u(t))-B_6(u(t))\ge(2\gamma-\gamma^2)A(u(t)),\qquad
 E(u(t))\ge \frac{A(u(t))}{3}.
 \end{gathered}
\end{equation}
The local theory and stability estimates in \cite[Section 2]{KM} apply to
solutions in
$C_t\dot H^1_x\cap L^{10}_{t,x,\mathrm{loc}}$. On every compact
subinterval $I\Subset I_{\max}$, these solutions also satisfy
$\nabla u\in L^q_tL^r_x(I)$ for each pair $(q,r)$ with
$\frac{2}{q}+\frac{3}{r}=\frac{3}{2}$ and $2\le q\le\infty$.

\begin{definition}\label{def:ap}
A nonzero solution is \emph{almost periodic modulo symmetries} if there
exist functions $N:I_{\max}\to(0,\infty)$ and $x:I_{\max}\to\R^3$ such
that the normalized orbit
\begin{equation}\label{eq:compact}
 \Kcal=\{N(t)^{-\frac{1}{2}}u(t,x(t)+\frac{\cdot}{N(t)}):
                         t\in I_{\max}\}
\end{equation}
is precompact in $\dot H^1$ and satisfies
$\inf_{v\in\Kcal}\|\nabla v\|_2>0$.
\end{definition}

\begin{proposition}\label{prop:reduction}
If Theorem~\ref{thm:main} fails, there is a nonzero almost-periodic
solution of \eqref{eq:nls} with maximal lifespan $I_{\max}=\R$ and
energy $E(u)=E_c\in(0,E(W))$, satisfying
\[
 \sup_t\|\nabla u(t)\|_2<\|\nabla W\|_2,\qquad
 \|u\|_{L^{10}_{t,x}([0,\infty)\times\R^3)}=\infty.
\]
Moreover, the forward half-line has a decomposition
$[0,\infty)=\bigcup_kJ_k$ into characteristic intervals such that
\[
 N|_{J_k}=N_k\ge1,\qquad |J_k|\sim_uN_k^{-2}.
\]
\end{proposition}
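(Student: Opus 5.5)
This is the standard concentration-compactness reduction, and I will assemble it from the cited literature rather than reprove it. First, assume Theorem~\ref{thm:main} fails. Define $L(E)$ as the supremum of $\|u\|_{L^{10}_{t,x}(I_{\max})}$ over solutions with $E(u)\le E$ and $\|\nabla u_0\|_2<\|\nabla W\|_2$. Let $E_c$ be the threshold where $L$ first becomes infinite. Small-data theory gives $E_c>0$, and failure of the theorem gives $E_c<E(W)$.

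Next, take a sequence of solutions $u_n$ with $E(u_n)\to E_c$ whose $L^{10}$ norms diverge on both halves of their lifespans. Applying Keraani's profile decomposition to $u_n(0)$, the energy trapping \eqref{eq:trapping} makes each profile's energy and gradient norm decouple asymptotically and keeps them below threshold. The stability theory of \cite[Section~2]{KM} then forces exactly one profile of energy $E_c$, with a vanishing remainder. This yields a critical element that is almost periodic modulo symmetries, as in \cite{KM,KVfocus}. Energy trapping gives $\sup_t\|\nabla u(t)\|_2<\|\nabla W\|_2$, and the orbit stays away from zero because $E(u)=E_c>0$ together with $E\ge A/3$. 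The scaling and translation symmetries are exactly those in \eqref{eq:compact}. There is no Galilean parameter, since boosts do not act on $\dot H^1$ in this problem.

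I then normalize the frequency scale, following \cite[Theorem~1.16]{KVquintic} (the argument is insensitive to the sign of the nonlinearity). Passing to limits of rescaled and time-translated versions of the critical element, I can arrange that $N(t)\ge1$ on $[0,\sup I_{\max})$ and that the $L^{10}$ norm is infinite forward in time. This gives two scenarios: a finite-time blowup with $\sup I_{\max}<\infty$, or a forward-global solution with $\inf_{t\ge0}N(t)\ge1$. The local constancy of $N$ on intervals of length $\sim_u N^{-2}$ comes from almost periodicity plus local well-posedness. It lets me modify $N$ by bounded factors so that it is piecewise constant on characteristic intervals $J_k$ with $|J_k|\sim_u N_k^{-2}$, using the standard construction from \cite{KVquintic}.

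The main obstacle is ruling out finite-time blowup and obtaining $I_{\max}=\R$ without finite mass. My plan follows \cite{KM} and \cite[Section~5]{KVquintic}. If $\sup I_{\max}<\infty$, then $N(t)\to\infty$, and almost periodicity gives vanishing of the localized mass $\int_{|x|\le R}|u(t)|^2$ as $t\to\sup I_{\max}$, for every $R$. The localized mass identity $|\partial_t\int\phi_R|u|^2|\lesssim\|\nabla u\|_2\|u\|_{L^3(|x|\sim R)}R^{-1}\cdot R^{2}$ then lets me integrate backward and conclude that the localized mass vanishes at every time. This forces $u\equiv0$, contradicting $E_c>0$. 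Applying the same reasoning in the backward direction gives $\inf I_{\max}=-\infty$, so $I_{\max}=\R$.
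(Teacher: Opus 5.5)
Your proposal follows essentially the same route as the paper: the critical element from \cite{KM}, the normalization to $N(t)\ge1$ and the characteristic intervals from \cite{KVquintic}, and exclusion of finite endpoints by the localized-mass argument, which is precisely the content of \cite[Theorem 5.1]{KVfocus} cited in the paper. One slip in that last step: the derivative bound should read $|\partial_t\int\phi_R|u|^2|\lesssim\|\nabla u\|_2\|u\|_6\|\nabla\phi_R\|_3\lesssim B^2$, uniformly in $R$ (your factor $R^{-1}\cdot R^2$ should be $R^{-1/2}$), so integrating back from $T^*=\sup I_{\max}$ gives $\int\phi_R|u(t)|^2\lesssim B^2(T^*-t)$; what forces $u\equiv0$ is then letting $R\to\infty$ to get $\|u(t)\|_2^2\lesssim B^2(T^*-t)$ and invoking mass conservation, not vanishing of the mass at fixed $R$.
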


\begin{proof}
The critical-element construction at the least nonscattering energy
$E_c$ in \cite[Propositions 4.1--4.2]{KM} does not require radial
symmetry. Time translation and compactness selection give a critical
element with infinite scattering size in both time directions and
a precompact normalized orbit on its maximal lifespan; see
\cite[Theorem 1.4]{KVquintic}. Energy trapping preserves strict
threshold gaps.

Using the selection argument in \cite[Theorem 1.8]{KVquintic} for
normalized limits of time translates and dilates, we obtain
$N(t)\ge1$ on $[0,T_+)$ after reversing time if necessary. The resulting
critical element may differ from the original one. Strong convergence
preserves its energy and threshold gaps, and stability preserves the
failure of scattering in both directions. This selection uses no sign
condition on the nonlinearity. Finite endpoints are excluded by
\cite[Theorem 5.1]{KVfocus}, which holds in every dimension $d\ge3$.
Thus the maximal lifespan is $\R$. Local constancy
\cite[Lemma 1.5]{KVquintic} and one fixed rescaling then give the
stated characteristic intervals with $N_k\ge1$.
\end{proof}

Henceforth all time norms and suprema refer to $[0,\infty)$ unless otherwise
specified. We fix the solution $u$ from
Proposition~\ref{prop:reduction}; constants may depend on its
precompact orbit and threshold gaps. Precompactness and $N(t)\ge1$
imply that its low-frequency gradient tends to zero uniformly in time.

With the focusing sign, the no-waste Duhamel formula
\cite[Proposition 1.9]{KVquintic} takes the form
\begin{equation}\label{eq:no-waste}
 u(t)=-\ii\lim_{T\to\infty}\int_t^T
             e^{\ii(t-s)\Delta}F(u(s))\dd s,
\end{equation}
where the limit is taken weakly in $\dot H^1$. This identity is valid
for a forward-global almost-periodic solution with $N(t)\ge1$; it
does not require finite mass.

\subsection{The quartic spacetime estimate and rigidity}
For $0<\nu<1$, set
\begin{equation}\label{eq:fixed-notation}
 \begin{gathered}
 h=P_{>\nu}u,\qquad l=P_{\le\nu}u,\qquad a(t)=\|h(t)\|_4^4,\\
 M_\nu=\sup_t\|h(t)\|_2\le \frac{B}{\nu},\qquad
 d_\nu=\nu M_\nu,\qquad\eta_\nu=\sup_t\|\nabla l(t)\|_2.
 \end{gathered}
\end{equation}

The projection gives $h\in C_tH^1_x$, so its quartic integral is finite
on compact time intervals. Write
\begin{equation}\label{eq:quartic-notation}
 K_a(I)=\int_Ia(t)\dd t
       =\|P_{>\nu}u\|_{L^4_{t,x}(I\times\R^3)}^4,
 \qquad K_N(I)=\int_IN(t)^{-1}\dd t.
\end{equation}
We prove the following estimate in Sections~\ref{sec:lp}--\ref{sec:interaction}.

\begin{proposition}[Frequency-localized interaction Morawetz estimate]
\label{prop:morawetz}
Let $u$ be as in Proposition~\ref{prop:reduction}. There exist constants
$C_u,\nu_0>0$ and an increasing sequence $T_j\to\infty$, independent
of $\nu$, such that
\begin{equation}\label{eq:morawetz-target}
 \|P_{>\nu}u\|_{L^4_{t,x}([0,T_j]\times\R^3)}^4
 \le C_u\nu^{-3}
   +\varepsilon_u(\nu)\|P_{>\nu}u\|_{L^4_{t,x}([0,T_j]\times\R^3)}^4,
 \qquad 0<\nu\le\nu_0,
\end{equation}
where $\varepsilon_u(\nu)\to0$ as $\nu\to0$. In particular,
for every sufficiently small fixed $\nu$,
\begin{equation}\label{eq:morawetz-global}
 \int_0^\infty\!\int_{\R^3}|P_{>\nu}u(t,x)|^4\dd x\dd t
 \lesssim_u\nu^{-3}.
\end{equation}
\end{proposition}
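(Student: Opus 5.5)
The plan is to integrate the frequency-localized interaction identity \eqref{eq:intro-identity} over $I_j=[0,T_j]$ and use the fundamental theorem of calculus. Set $M(t)=M_{k_{R(t)}}(h(t))$, where the convex kernel $k_R$ has radius $R(t)$ chosen by the density equation \eqref{eq:intro-radius}. I would first derive $M'=\mathcal B+\mathrm{Err}+D$ from the projected equation
\[
(\ii\partial_t+\Delta)h=-F(h)-\mathcal E,\qquad \mathcal E=P_{>\nu}F(u)-F(h).
\]
Here $\mathcal B$ collects the kinetic bilinear virial term and the potential term $-\frac{2}{3}\iint\rho\,\Delta k_R\,|h|^6$ in directional form. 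The term $\mathrm{Err}$ collects the mass and momentum brackets of $\mathcal E$, and $D=R'\partial_R M$ is the radius contribution. The endpoint bound $|M|\lesssim\|h\|_{L^\infty_tL^2_x}^3\|\nabla h\|_{L^\infty_tL^2_x}\|\nabla k_R\|_\infty\lesssim_u\nu^{-3}$ follows from $M_\nu\le B/\nu$ and bounded $\nabla k_R$. Integrating then gives
\[
\int_{I_j}\mathcal B\le 2C\nu^{-3}+\Bigl|\int_{I_j}\mathrm{Err}\Bigr|-\int_{I_j}D .
\]

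For the principal term, I would write $\nabla^2k_R$ as an average over directions $\omega$ of one-dimensional Green kernels. Applying the anisotropic sharp-Sobolev coercivity $\|\partial_\omega v\|_2^2-\frac13\|v\|_6^6\ge c_u\|\partial_\omega v\|_2^2$ to $h$ gives $\mathcal B\gtrsim Q_R(h)$. This requires that $h$ inherits the threshold gap \eqref{eq:trapping} because $\eta_\nu\to0$. I would control the region off the compact core with compactness and Hardy's inequality, and then use the radius equation $C(R,\rho)=\kappa R^4$ to get $Q_R\gtrsim_u R\gtrsim_u a(t)$ as in \eqref{eq:intro-weighted}. For $D$, I would differentiate the radius equation and retain the mass source. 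Completing the square as described in the introduction gives $D\ge 2M_\psi^2/T_R-(\text{source terms})\ge-\varepsilon_{\rm rad}(\nu)a(t)$, with the source terms bounded through the low-frequency estimates of Section~\ref{sec:lp}.

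I expect the source term to be the main obstacle. I would split $\mathcal E=G_1-g$ with $G_1=F(u)-F(h)$ and $g=P_{\le\nu}F(u)$. The mass bracket of $G_1$ reduces to $\Imn(F(u)\bar l)$. Bounding it by $\|l\|_{L^2_tL^\infty_x}$ times $\|F(u)\|_{L^2_tL^1_x}$, and applying the comparison \eqref{eq:intro-key} from Proposition~\ref{prop:good-time} together with $\|l\|_{L^2_tL^\infty_x}\lesssim\nu K_a^{1/2}$, gives a bound $\lesssim d_\nu K_a(I_j)$ after pairing with $M_\nu$. In the momentum bracket, I would integrate by parts so that the derivative falls on $l$, and use $\nu^{-1}\|\nabla l\|_{L^2_tL^\infty_x}$ to get $d_\nu^2K_a$. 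For $g$, I would write $h=\Div(\nabla\Delta^{-1}h)$, integrate by parts, and control $\nabla\Delta^{-1}h$ by the Killip--Visan long-time Strichartz estimate, which yields the $\nu^{1/4}[\nu^{-3}+K_a]$ bound in \eqref{eq:intro-source}. The delicate point is keeping all of these bounds uniform for the time-varying kernel $k_{R(t)}$. The sequence $T_j$ would be chosen, independently of $\nu$, as endpoints of the characteristic intervals $J_k$, so that the local-constancy and Strichartz bounds hold on each $I_j$.

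Combining the three estimates gives $c_uK_a(I_j)\le C\nu^{-3}+(\varepsilon_{\rm rad}+\varepsilon_{\rm src})K_a(I_j)+\varepsilon_{\rm src}\nu^{-3}$, which is \eqref{eq:morawetz-target} after dividing by $c_u$. Since $K_a(I_j)<\infty$ (because $h\in C_tH^1$), taking $\varepsilon_u(\nu)\le\frac12$ lets us absorb that term, giving $K_a(I_j)\le 2C_u\nu^{-3}$ uniformly in $j$. Monotone convergence as $j\to\infty$ then yields \eqref{eq:morawetz-global}.
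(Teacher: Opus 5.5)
Your skeleton matches the paper's. You integrate $M'=\BB+\mathrm{Err}+D$ over $I_j$, use Proposition~\ref{prop:clock-positive} for $\BB\ge c_ua$, the completed square \eqref{eq:ir10}--\eqref{eq:ir13} for $D$, and Proposition~\ref{prop:full-source} for the source. You then add the endpoint bound $|M|\lesssim_u\nu^{-3}$ and absorb using $K_a(I_j)<\infty$. The gap is the one place where you commit to a specific construction: the terminal times. Taking $T_j$ to be endpoints of characteristic intervals does not give the comparison \eqref{eq:good-time}, and every source bound relies on it.

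The low-frequency bounds \eqref{eq:deep-low-H} come from the no-waste formula, which integrates the forcing forward to $+\infty$. They are therefore controlled by the one-sided maximal function $F_+=\mathcal H_+f$, whose averaging windows run past $T_j$. The source bounds then reduce as follows:
\begin{itemize}
\item the $G_1$ contribution is bounded by $(d_\nu+d_\nu^2)\int_{I_j}fF_+$;
\item the $g$ contribution is bounded by $\|f\|_{L^2(I_j)}X^{1/2}$;
\item even $\|f\|_{L^2(I_j)}$ reduces to $K_a$ only through $f\le C_Ba^{1/2}+C\eta_\nu^4F_+$.
\end{itemize}
For a generic $T$, the maximal theorem bounds the windows of length at most $T$ only by $E_f(2T)$. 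The longer windows involve $f$ at arbitrarily late times. Since $f\in L^2(0,\infty)$ is not yet known, nothing compares these quantities with $E_f(T)$ or with $K_a([0,T])$. Each attempt at absorption produces quartic integrals over longer intervals, so the argument does not close.

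The missing idea is the selection in Proposition~\ref{prop:good-time}: choose $T_j\ge R_j$ with $E_f(T_j)/T_j\ge\frac12\sup_{s\ge R_j}E_f(s)/s$. This bounds $E_f(2T_j)$, and all windows with $s>T_j$, by a multiple of $E_f(T_j)$. After that, \eqref{eq:f-absorb} can be absorbed for small $\nu$, giving $\|f\|_{L^2(0,T_j)}^2+\|F_+\|_{L^2(0,T_j)}^2\lesssim_uK_a([0,T_j])$. Because $f=\|F(u)\|_1$ does not involve $\nu$, this sequence is automatically independent of $\nu$, which is why the statement can assert it.

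Alignment with the $J_k$ is not needed. The estimate \eqref{eq:lts} holds on any compact interval after removing at most two partial characteristic intervals, each costing $O_u(1)$.

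A lesser omission: you take $M'=\BB+\mathrm{Err}+D$ for granted. The paper has to justify the integrated identity for $h\in C(I;H^1)$ and $R\in W^{1,\infty}$. It does so by mollifying $h$ while keeping the radius of the original density fixed, and then passing to the limit term by term.
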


Precompactness gives $K_a(I)\gtrsim_u K_N(I)$ for small $\nu$, as shown in
\eqref{eq:a-bounds}. Thus \eqref{eq:morawetz-global} already rules
out $K_N([0,\infty))=\infty$. To cover all frequency scales, we will
also prove the finite-interval comparison
$\|F(u)\|_{L^2_tL^1_x([0,T_j])}^2\lesssim_u K_a([0,T_j])$
in Proposition~\ref{prop:good-time}. Combined with
\eqref{eq:morawetz-global}, it gives the hypothesis of the following
rigidity lemma.

\begin{lemma}\label{lem:forcing-rigidity}
Let $u$ be a forward-global strong solution of \eqref{eq:nls}
satisfying \eqref{eq:no-waste} and
\[
 \sup_{t\ge0}\|\nabla u(t)\|_2\le B<\infty,
 \qquad F(u)\in L^2_tL^1_x([0,\infty)\times\R^3).
\]
Then $u\equiv0$.
\end{lemma}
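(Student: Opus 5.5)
The plan is to prove that $u(t)\in L^2_x$ with a bound controlled by the tail of the forcing, and then use mass conservation. Write $f(s)=\|F(u(s))\|_1$, so that $f\in L^2([0,\infty))$ by hypothesis. Fix $t\ge0$ and a dyadic $K\ge1$. Split $u(t)=P_{\le K}u(t)+P_{>K}u(t)$. The high-frequency part is trivial by Bernstein:
\[
 \|P_{>K}u(t)\|_2\lesssim K^{-1}\|\nabla u(t)\|_2\le BK^{-1}.
\]
For the low-frequency part, apply $P_{\le K}$ to the no-waste formula \eqref{eq:no-waste}. For each finite $T$, the truncated integral
\[
 v_T=\int_t^T e^{\ii(t-s)\Delta}P_{\le K}F(u(s))\dd s
\]
is an honest $L^2$ function, and we estimate its $L^2$ norm uniformly in $T$.

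The key step is the single-Duhamel (Schur) estimate already mentioned in the introduction. Expanding the square gives
\[
 \|v_T\|_2^2=\int_t^T\!\!\int_t^T\langle P_{\le K}F(u(s)),\,e^{\ii(s-\tau)\Delta}P_{\le K}F(u(\tau))\rangle\dd s\dd\tau .
\]
The kernel of $P_{\le K}^2e^{\ii\sigma\Delta}$ is bounded in $L^\infty_x$ by $\min\{K^3,|\sigma|^{-3/2}\}$: this follows from Bernstein for small $\sigma$ and from the dispersive estimate for large $\sigma$. Hence
\[
 \|v_T\|_2^2\lesssim\int_t^\infty\!\!\int_t^\infty m_K(s-\tau)f(s)f(\tau)\dd s\dd\tau,
 \qquad m_K(\sigma)=\min\{K^3,|\sigma|^{-3/2}\}.
\]
Since $\|m_K\|_{L^1(\R)}\sim K$, Young's (or Schur's) inequality gives $\|v_T\|_2^2\lesssim K\|f\|_{L^2([t,\infty))}^2$, uniformly in $T$. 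The point is that only one time integral against the integrable majorant is needed, so the obstruction \eqref{eq:dim} does not arise.

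To pass to the limit, note that $v_T\to -\ii^{-1}P_{\le K}u(t)$ weakly in $\dot H^1$, hence in the sense of distributions. Uniform $L^2$ bounds and weak lower semicontinuity then give
\[
 \|P_{\le K}u(t)\|_2\lesssim K^{1/2}\|f\|_{L^2([t,\infty))}.
\]
The justification that the weak $\dot H^1$ limit coincides with the weak $L^2$ limit of a subsequence is the only technical point in this step, and it is routine.

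Combining the two pieces yields $u(t)\in L^2$ with
\[
 \|u(t)\|_2\lesssim K^{1/2}\|f\|_{L^2([t,\infty))}+BK^{-1}
\]
for every $K\ge1$. Thus $u(t)\in H^1$ for every $t\ge0$. For $H^1$ strong solutions, mass is conserved; the $L^{10}_{t,x,\mathrm{loc}}$ local theory gives $H^1$ persistence and the standard mass identity. So $\|u(t)\|_2=\|u(0)\|_2$ for all $t\ge0$. Now let $\eps>0$, choose $K$ so large that $BK^{-1}<\eps$, and then choose $t$ so large that $K^{1/2}\|f\|_{L^2([t,\infty))}<\eps$, which is possible because the tail of an $L^2$ function tends to zero. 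This gives $\|u(0)\|_2=\|u(t)\|_2\lesssim\eps$, so $u(0)=0$ and therefore $u\equiv0$ by uniqueness.

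The main obstacle I expect is the justification of mass conservation for a solution known a priori only to be in $\dot H^1$. This can be handled by noting that $u(0)\in H^1$, so the $H^1$ Cauchy theory on compact subintervals applies, and the unique $\dot H^1$ strong solution coincides with it. An alternative is to avoid conservation entirely: the bound above shows $\sup_t\|u(t)\|_2<\infty$, and one can apply the same estimate at each time $t$ directly. The conservation route is cleaner, however.
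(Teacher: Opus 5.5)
Your argument is correct and follows the paper's proof step for step: the same low/high frequency split, the same bound $\|P_{\le K}u(t)\|_2\lesssim K^{1/2}\|f\|_{L^2(t,\infty)}$ obtained from the no-waste formula, finite mass, then mass conservation via the $H^1$ theory, and finally $t\to\infty$ followed by $K\to\infty$. The only variation is in proving the $L^2$ bound: you use a $TT^*$/Young argument with the integrable majorant $\min\{K^3,|\sigma|^{-3/2}\}$ and weak lower semicontinuity, whereas the paper cites dual endpoint Strichartz plus Bernstein and uses the Cauchy property in $L^2$. One caution: the ``alternative'' in your last paragraph does not work on its own. Without mass conservation, the fixed-time estimate only gives $\|u(t)\|_2\to0$ as $t\to\infty$, not $u(0)=0$.
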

With $f(t)=\|F(u(t))\|_1$, the key estimate, valid for every $n>0$, is
\begin{equation}\label{eq:mass-zero}
 \|u(t)\|_2\le Cn^{\frac{1}{2}}\|f\|_{L^2(t,\infty)}+\frac{CB}{n}.
\end{equation}
Taking $n=1$ recovers finite mass; mass conservation
and the decay of the forcing tail then force it to vanish. We give
the full proof at the end of Section~\ref{sec:interaction}.

\subsection{The interaction identity}
To obtain \eqref{eq:morawetz-target}, we seek a positive time
derivative controlling $a(t)$. For $k(x)=|x|$, the density term is
$8\pi a(t)$, but the focusing potential is negative. We therefore
need a kernel whose full principal term retains control of $a(t)$.

For $h\in H^1$ and a real even kernel $k$, set
\[
 \rho=|h|^2,\qquad p=\Imn(\bar h\nabla h),\qquad
 Q_k(h)=-\iint\Delta^2k(x-y)\rho(x)\rho(y)\dd x\dd y.
\]
The action and its principal term are
\begin{equation}\label{eq:action}
 M_k(h)=2\iint\rho(y)\nabla k(x-y)\cdot p(x)\dd x\dd y,
\end{equation}
\begin{align}\label{eq:principal}
 \BB_k(h)={}&Q_k(h)
 +4\iint k_{jk}(x-y)
 \big[\rho(y)\Ren(\partial_j\bar h\,\partial_kh)(x)-p_j(y)p_k(x)\big]
                                                   \dd x\dd y\notag\\
 &-\frac{4}{3}\iint\Delta k(x-y)|h(x)|^6\rho(y)\dd x\dd y.
\end{align}
For the source estimates we require, uniformly in $t$,
\begin{equation}\label{eq:kernel-source-bounds}
 |\nabla k(t,x)|+|x||D^2k(t,x)|+|x|^2|D^3k(t,x)|\le C_k.
\end{equation}
We interpret spatial derivatives in the weak sense; the bounds
hold almost everywhere.
Set
\[
  q(x)=\int\nabla k(x-y)\rho(y)\dd y,\quad
 q_m(y)=\int\nabla k(x-y)\cdot p(x)\dd x.
\]
The projected equation and the two forcing brackets are
\begin{gather*}
 (\ii\partial_t+\Delta)h=-F(h)-\mathcal E,\qquad
 \mathcal E=G_1-g,\quad G_1=F(u)-F(h),\quad g=P_{\le\nu}F(u),\\
 \{z,h\}_m=\Imn(z\bar h),\qquad
 \{z,h\}_p=\Ren(z\nabla\bar h-h\nabla\bar z).
\end{gather*}
Differentiation gives
\begin{align*}
 \partial_t\rho&=-2\Div p-2\{\mathcal E,h\}_m,\\
 \partial_tp_j&=\frac{1}{2}\partial_j\Delta\rho
 -2\partial_k\Ren(\partial_j\bar h\,\partial_kh)
 +\frac{2}{3}\partial_j|h|^6-\{\mathcal E,h\}_{p,j}.
\end{align*}
Substitution into \eqref{eq:action} yields \eqref{eq:principal} and
\begin{equation}\label{eq:source-identity}
 \mathrm{Err}_{k,\nu}
 =-2\int q\cdot\{\mathcal E,h\}_p
                    -4\int q_m\{\mathcal E,h\}_m.
\end{equation}
For the differentiable family $k_{R,A}$ constructed in
Section~\ref{sec:static}, we set
$M_\psi=2\int p\cdot(\partial_R\nabla k_{R,A}*\rho)$.
The full identity is
\begin{equation}\label{eq:full-identity}
 \frac{\mathrm d}{\mathrm dt}M_{k_{R(t),A}}(h(t))
 =\BB_{k_{R(t),A}}(h(t))
       +\mathrm{Err}_{k_{R(t),A},\nu}(t)+R'(t)M_\psi(h(t)).
\end{equation}

We first perform these computations for smooth functions and justify
the integrated identity at energy regularity in the proof of
Proposition~\ref{prop:morawetz}. Denoting the last term by
$D(t)=R'(t)M_\psi(h(t))$, we will establish
\begin{equation}\label{eq:three-estimates}
 \begin{gathered}
 \BB_{k_{R(t),A}}(h(t))\ge c_u a(t),\qquad
 D(t)\ge-\varepsilon_{\rm rad}(\nu)a(t),\\
 \left|\int_{I_j}\mathrm{Err}_{k_{R(t),A},\nu}(t)\dd t\right|
 \le\varepsilon_{\rm src}(\nu)\,[\nu^{-3}+K_a(I_j)],
 \qquad \varepsilon_{\rm rad}(\nu)+\varepsilon_{\rm src}(\nu)\to0.
 \end{gathered}
\end{equation}
Since $|M_k(h)|\le2\|\nabla k\|_\infty\|h\|_2^3\|\nabla h\|_2
\lesssim_u\nu^{-3}$, integrating \eqref{eq:full-identity} will give
\eqref{eq:morawetz-target}. The low-frequency estimates below will control the source.
The kernel must also give the first two inequalities in
\eqref{eq:three-estimates} simultaneously.

\section{Low-frequency bounds}\label{sec:lp}
The source in \eqref{eq:source-identity} contains both the removed
low-frequency part $l$ and the low-frequency output $P_{\le\nu}F(u)$.
To estimate them against the quartic integral, we first prove
$u\in L^\infty_tL^4_x$, which makes $f(t)=\|F(u(t))\|_1$ bounded.
The no-waste formula then gives low-frequency derivative bounds in
terms of future averages of $f$. By selecting suitable terminal times,
we can compare those averages with $K_a$ before establishing any
global time integrability.

\subsection{Spatial integrability of the forcing}
To estimate the annular pieces of \eqref{eq:no-waste}, we use the
refined Sobolev inequality \cite[Th\'eor\`eme 2]{GMO}, with
$p=2$, $q=4$, $s=\alpha=1$:
\begin{equation}\label{eq:refined}
 \|f\|_4^4\lesssim
 \left(\sup_LL^{-1}\|P_Lf\|_\infty\right)^2\|\nabla f\|_2^2.
\end{equation}
The next recurrence proves finiteness of the Besov seminorm on the
right without assuming finite mass.

\begin{lemma}\label{lem:l4}
Let $u$ be a forward-global almost-periodic solution of
\eqref{eq:nls} with
\[
 N(t)\ge1,\qquad \sup_t\|\nabla u(t)\|_2\le B<\infty.
\]
Then
\begin{equation}\label{eq:besov}
 A_*:=\sup_LL^{-1}\|P_Lu\|_{L^\infty_{t,x}}<\infty,
 \qquad U:=\sup_t\|u(t)\|_4<\infty,
 \qquad\sup_t\|F(u(t))\|_1<\infty.
\end{equation}
\end{lemma}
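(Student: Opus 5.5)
The plan is to bound $A_*$ first and then read off the other two bounds. By the refined Sobolev inequality \eqref{eq:refined}, $U^2\lesssim A_*B$. H\"older then gives $\|F(u(t))\|_1\le\|u(t)\|_4^2\|u(t)\|_6^3\lesssim A_*B\cdot B^3$, using Sobolev for the $L^6$ factor. So everything reduces to $A_*<\infty$.

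\textbf{High frequencies.} For $L\ge1$, Bernstein gives
\[
\|P_Lu\|_\infty\lesssim L^{1/2}\|\nabla P_Lu\|_2\le L^{1/2}B,
\]
so $L^{-1}\|P_Lu\|_\infty\lesssim B$. More generally, for any fixed $L_0>0$ the truncated quantity $A(L_0):=\sup_{L\ge L_0}L^{-1}\|P_Lu\|_{L^\infty_{t,x}}\lesssim L_0^{-1/2}B$ is finite. The task is therefore to bound $A(L_0)$ uniformly as $L_0\to0$ through a recurrence in which $A(L_0)$ appears on the right with a power strictly less than one, or with a small coefficient.

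\textbf{Low frequencies via the no-waste formula.} For $L<1$, I project \eqref{eq:no-waste} to frequency $L$. The limit is weak in $\dot H^1$, but for fixed $L$ it is also a pointwise limit, so it passes to $L^\infty$ bounds. This gives
\[
\|P_Lu(t)\|_\infty\le\int_0^\infty\|e^{\ii\tau\Delta}P_LF(u(t+\tau))\|_\infty\dd\tau .
\]
The endpoint choices fail on their own. The $L^1\to L^\infty$ dispersive bound with Bernstein gives a majorant $\min\{L^3,\tau^{-3/2}\}\|F\|_1$, which integrates to $L\|F\|_1$. This is exactly the right scaling, but it uses $\|F\|_1$, which is what we want to prove. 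The $L^{6/5}$ route uses only $\|F\|_{6/5}\le B^5$, but it yields $\min\{L^{5/2},L^{1/2}\tau^{-1}\}$, which diverges logarithmically.

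I would interpolate between the two, estimating $F$ in $L^p$ for some $1<p<6/5$. By H\"older,
\[
\|F\|_p\le\|u\|_4^{\theta}\|u\|_6^{5-\theta}\quad\text{for suitable }\theta\in(0,2).
\]
The dispersive decay is then $\tau^{-3(1/p-1/2)}$ with exponent $>1$, so the majorant $\min\{L^{3/p},L^{3/p-3/2}\tau^{-3(1/p-1/2)}\}$ integrates to $\sim L$. By \eqref{eq:refined}, $\|u\|_4^\theta\lesssim(A_*B)^{\theta/2}$ with $\theta/2<1$. This gives the sublinear recurrence $L^{-1}\|P_Lu\|_\infty\lesssim B^{\,\cdot}A_*^{\theta/2}$.

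\textbf{Main obstacle: closing the recurrence.} The recurrence involves the full $A_*$, not the truncated $A(L_0)$: the $L^4$ norm of $u$ sees all frequencies, including those below $L_0$, so the bootstrap cannot be closed directly. I would try a split $u=P_{\ge L_0}u+P_{<L_0}u$ inside $F$. The terms containing the low piece would be estimated using that $\sup_t\|\nabla P_{<L_0}u\|_2\to0$, which follows from precompactness and $N(t)\ge1$. This gives a small coefficient times $A$ plus a bounded remainder, and hence $A(L_0)\le C+CA(L_0)^{\theta/2}+o(1)A(L_0)$, uniformly in $L_0$. Letting $L_0\to0$ then gives $A_*<\infty$.

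The delicate point is whether $\|P_{<L_0}u\|_4$ can be controlled without finite mass. I would handle it with the refined Sobolev inequality applied to $P_{<L_0}u$, whose Besov seminorm is controlled by the same quantity $A$, multiplied by the small gradient. If this fails, the fallback is to take $\theta$ small enough that only a tiny power of the $L^4$ norm is needed, and to run the recurrence on the dyadic sequence $L_0=2^{-n}$ with summable constants.
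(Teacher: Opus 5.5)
The reduction to $A_*<\infty$ and the high-frequency Bernstein bound are fine. The low-frequency step, however, has a genuine gap, and the cause is scaling rather than bookkeeping. For $1<p<\frac{6}{5}$ the large-time bound is
\[
 \|e^{\ii\tau\Delta}P_L\|_{L^p\to L^\infty}\lesssim L^{3-\frac{3}{p}}\tau^{-3(\frac{1}{p}-\frac{1}{2})},
\]
because Bernstein from $L^{p'}$ to $L^\infty$ costs $L^{3/p'}$. Your $p=\frac{6}{5}$ computation, with $L^{1/2}$, uses exactly this; the general factor $L^{\frac{3}{p}-\frac{3}{2}}$ you wrote is a slip. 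The crossover with $L^{3/p}$ occurs at $\tau\sim L^{-2}$, so the time integral is $\sim L^{\frac{3}{p}-2}$, not $\sim L$. With $\frac{1}{p}=\frac{5}{6}+\frac{\theta}{12}$ you only get
\[
 L^{-1}\|P_Lu\|_{L^\infty_{t,x}}\lesssim L^{-(\frac{1}{2}-\frac{\theta}{4})}\sup_t\|F(u(t))\|_p,
\]
which blows up as $L\to0$ for every $\theta<2$. This loss is unavoidable. Under $u\mapsto\lambda^{\frac{1}{2}}u(\lambda\cdot)$, both $\sup_LL^{-1}\|P_Lu\|_\infty$ and $\|F(u)\|_1$ scale like $\lambda^{-\frac{1}{2}}$, but $\|F(u)\|_p$ does not. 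So only $p=1$ can give a bound uniform in $L$. Your stated recurrence $L^{-1}\|P_Lu\|_\infty\lesssim_BA_*^{\theta/2}$ has a right side scaling like $\lambda^{-\frac{\theta}{4}}$, so it is not even scale-consistent. Your fallback of taking $\theta$ small goes the wrong way: as $\theta\to0$ the exponent tends to $-\frac{1}{2}$, which is just the trivial Bernstein rate $BL^{-\frac{1}{2}}$.

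You set aside the $p=1$ route as circular. The missing idea is how to make that route non-circular. The paper fixes $L_0$ so that $v=P_{\le L_0}u$ satisfies $\|\nabla v\|_{L^\infty L^2}\le\eta$. It then bounds $F(u)-F(v)$ in $L^\infty_tL^1_x$ by $C_{L_0,B}$, using $\|u-v\|_2\lesssim L_0^{-1}B$. At output frequency $L$ it writes $v=r+z$ with $r=P_{\le\frac{L}{100}}v$, so that $P_LF(r)=0$. This removes the purely low-frequency interaction, which is exactly what forces the full $A_*$ into your estimate. The remaining terms obey $\|F(z)\|_1\lesssim\eta^4\sup_{M\ge cL}a_M$ by \eqref{eq:refined}. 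The mixed terms are bounded by $\|r\|_\infty\|z\|_2\eta^3\lesssim\eta^4\sum_{M\le CL}\frac{M}{L}a_M$, since $\|z\|_2\lesssim L^{-1}\eta$. This gives the \emph{linear} recurrence \eqref{eq:recurrence} with the small coefficient $\eta^4$.

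Your proposed fix, applying \eqref{eq:refined} to $P_{<L_0}u$ with its Besov seminorm ``controlled by $A$'', is circular. That seminorm involves $a_M$ for $M<L_0$, and for these only the bound $BM^{-\frac{1}{2}}$ is known. The paper closes the recurrence with the weighted quantity $A_\eps=\sup_L\min\{1,(\frac{L}{\eps})^\alpha\}a_L$, $\frac{1}{2}<\alpha<1$, which is finite a priori by Bernstein. The geometric factor $\frac{M}{L}$ on lower frequencies keeps the weight ratios summable, and letting $\eps\to0$ gives $A_*<\infty$.
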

\begin{proof}
Bernstein gives the initial bound on $a_L$ below. Given a small
$\eta>0$, low-frequency compactness provides $L_0>0$ such that
\[
 a_L:=L^{-1}\|P_Lu\|_{L^\infty_{t,x}}\lesssim BL^{-\frac{1}{2}},\qquad
 v=P_{\le L_0}u,\quad\|\nabla v\|_{L^\infty L^2}\le\eta.
\]
Since $\|u-v\|_{L^\infty L^2}\lesssim L_0^{-1}B$ and
$\|u-v\|_{L^\infty L^6}\lesssim B$, interpolation and H\"older give
\begin{equation}\label{eq:forcing-high}
 \|u-v\|_{L^\infty L^3}\le C_{L_0,B},\qquad
 \|F(u)-F(v)\|_{L^\infty L^1}
 \lesssim\|u-v\|_{L^\infty L^3}
       (\|u\|_{L^\infty L^6}+\|v\|_{L^\infty L^6})^4
 \le C_{L_0,B}.
\end{equation}
At a fixed output frequency, set $r=P_{\le \frac{L}{100}}v$ and $z=v-r$.
The Fourier supports of $r$ and $F(r)$ lie in $B(0,\frac{L}{50})$ and
$B(0,\frac{L}{10})$, respectively, so $P_LF(r)=0$. Moreover, $\widehat z$
vanishes on $B(0,\frac{L}{100})$, and the preceding bounds give
\[
 \|z\|_2\lesssim L^{-1}\eta,\qquad
 \|z\|_6+\|r\|_6\lesssim\eta,\qquad
 \sup_{M\ge cL}a_M\lesssim BL^{-\frac{1}{2}}<\infty.
\]
By the uniform $L^1$ bounds for the annular cutoff kernels, the Besov
seminorm of $z$ is at most $C\sup_{M\ge cL}a_M$. Thus \eqref{eq:refined} gives
\[
 \|F(z)\|_1\le\|z\|_4^2\|z\|_6^3
                 \lesssim\eta^4\sup_{M\ge cL}a_M.
\]
The expansion $F(v)=(r+z)^3(\bar r+\bar z)^2$ leaves only $F(z)$
and mixed terms after projection. For $1\le k\le4$, H\"older gives
\begin{align*}
 \||z|^k|r|^{5-k}\|_1
 &\le\|r\|_\infty\|z\|_2\|z\|_6^{k-1}\|r\|_6^{4-k}\\
 &\lesssim\eta^4\sum_{M\le CL}\frac{M}{L}a_M.
\end{align*}
This sum is finite by the initial Bernstein bound, since
$\sum_{M\le CL}(\frac{M}{L})a_M\lesssim BL^{-\frac{1}{2}}$.
The annular propagator obeys
\[
 \int_0^\infty\|e^{-\ii s\Delta}P_L\|_{L^1\to L^\infty}\dd s
 \lesssim\int_0^\infty\min(L^3,s^{-\frac{3}{2}})\dd s\lesssim L.
\]
For each fixed $L$, all retained source terms belong to
$L^\infty_tL^1_x$. After inserting a slightly enlarged annular
multiplier, the preceding kernel bound makes their Duhamel integrals
absolutely convergent in $L^\infty_x$. Their distributional limits
agree with the weak $\dot H^1$ limit in \eqref{eq:no-waste}. We obtain
\begin{equation}\label{eq:recurrence}
 a_L\le C_{L_0,B}+C\eta^4
 \left[\sup_{M\ge cL}a_M+\sum_{M\le CL}(\frac{M}{L})a_M\right].
\end{equation}
We have not yet shown that $\sup_La_L$ is finite. To close
\eqref{eq:recurrence}, fix $\frac{1}{2}<\alpha<1$ and introduce
\[
 w_\eps(L)=\min\{1,(\frac{L}{\eps})^\alpha\},
 \qquad A_\eps=\sup_Lw_\eps(L)a_L\lesssim B\eps^{-\frac{1}{2}}<\infty.
\]
The initial Bernstein bound and the weight ratios give
\[
 \sup_{M\ge cL}\frac{w_\eps(L)}{w_\eps(M)}\lesssim_c1,\qquad
 \sum_{M\le CL}\frac{M}{L}\frac{w_\eps(L)}{w_\eps(M)}
          \lesssim_C\sum_{j\ge0}2^{-j(1-\alpha)}<\infty,
\]
and hence
\[
 A_\eps\le C_{L_0,B}+C_\alpha\eta^4A_\eps.
\]
Choosing $\eta$ so that $C_\alpha\eta^4\le\frac{1}{2}$ and then fixing
$L_0$, we obtain $A_\eps\le2C_{L_0,B}$ uniformly in $\eps$. Letting $\eps\to0$ at each
fixed $L$ and then taking the supremum proves $A_*<\infty$. Equation \eqref{eq:refined} yields
\begin{equation}\label{eq:Fstar}
 U^4\lesssim A_*^2B^2<\infty,
 \qquad\sup_t\|F(u(t))\|_1\le U^2\sup_t\|u(t)\|_6^3
                                      \le CU^2B^3=:F_*.
\end{equation}
\end{proof}

\subsection{Projection and the forcing comparison}
We now verify the uniform properties of the fixed projection in
\eqref{eq:fixed-notation}.
For the spatial coercivity estimate, the normalized profiles of $h$
must remain below both thresholds. For the source estimate, we need
the stronger mass bound $\nu M_\nu=o(1)$. Both follow from the same
uniform low-frequency tail.
Set
\[
 \omega(c)=4\pi^2\sup_t\int_{|\xi|\le2c}|\xi|^2|\widehat u(t,\xi)|^2\dd\xi.
\]
Then $\omega(c)\to0$ as $c\to0$. For $0<\nu<c$, the multiplier bounds imply
\[
 \eta_\nu^2\le\omega(\nu),\qquad
 d_\nu^2\le\omega(c)+\nu^2c^{-2}B^2.
\]
Letting first $\nu\to0$ and then $c\to0$ gives
\begin{equation}\label{eq:dsmall}
 \eta_\nu\to0,\qquad d_\nu\to0.
\end{equation}
With tildes denoting the normalization in \eqref{eq:compact}, we have
\begin{gather*}
 \|\widetilde h(t)-\widetilde u(t)\|_{\dot H^1}
       =\|h(t)-u(t)\|_{\dot H^1}\le\eta_\nu,\\
 \|\nabla h\|_2^2\le(1-\gamma)G,\qquad
 |E(h)-E(u)|\lesssim(B+B^5)\eta_\nu\to0.
\end{gather*}
Thus both threshold gaps persist uniformly for small $\nu$.
Precompactness and the lower bound in Definition~\ref{def:ap} give
$R_0,b_0>0$ such that
\[
 \inf_{v\in\Kcal}\|v\|_{L^4(B(0,R_0))}\ge b_0>0,\qquad
 \|\widetilde h-\widetilde u\|_{L^4(B(0,R_0))}
       \lesssim R_0^{\frac{1}{4}}\eta_\nu\le \frac{b_0}{2}.
\]
Rescaling and the $L^4$ multiplier bound yield
\begin{equation}\label{eq:a-bounds}
 c_{\Kcal}N(t)^{-1}\le a(t)\le CU^4=:A_0,
 \qquad K_a(I)\gtrsim_u K_N(I).
\end{equation}

To compare the source with $K_a$, we use the one-sided
Hardy--Littlewood maximal operator
\[
 (\mathcal H_+f)(t)=\sup_{s>0}\frac{1}{s}\int_t^{t+s}f(\tau)\dd\tau,
 \qquad E_f(T)=\int_0^Tf(t)^2\dd t.
\]
Its averaging intervals can extend beyond the terminal time. We control
the contribution from later times by selecting endpoints before fixing
$\nu$, without assuming global time integrability.

\begin{proposition}\label{prop:good-time}
Set $f(t)=\|F(u(t))\|_1$ and $F_+=\mathcal H_+f$. There is an increasing
sequence $T_j\to\infty$, independent of $\nu$, such that
\begin{equation}\label{eq:good-time}
 \|f\|_{L^2(0,T_j)}^2+\|F_+\|_{L^2(0,T_j)}^2
 \le C_uK_a([0,T_j]),\qquad 0<\nu\le\nu_0(u).
\end{equation}
\end{proposition}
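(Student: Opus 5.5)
We first reduce the statement to the single estimate $E_f(T)\lesssim_u K_N([0,T])$ along a suitable sequence $T_j$. By \eqref{eq:a-bounds}, $K_a([0,T])\ge c_{\Kcal}K_N([0,T])$ for every $0<\nu\le\nu_0(u)$, with $c_{\Kcal}$ independent of $\nu$. Any sequence chosen by comparing $f$ with $N^{-1}$ is therefore automatically independent of $\nu$. So we aim to find $T_j\to\infty$ with
\[
 \|f\|_{L^2(0,T_j)}^2+\|F_+\|_{L^2(0,T_j)}^2\lesssim_u K_N([0,T_j]).
\]

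\emph{Step 1: pointwise bound on $f$.} On each characteristic interval $J_k$ we have $N(t)=N_k$. By scaling, $\|F(u(t))\|_1$ has weight $N^{-3/2}$: if $u=N^{1/2}\tilde u(N(x-x(t)))$, then $\|F(u)\|_1=N^{5/2}N^{-3}\|F(\tilde u)\|_1$. Lemma~\ref{lem:l4} should apply uniformly to the normalized orbit. Its proof only uses $B$ and low-frequency compactness, and both are scale-invariant properties of $\Kcal$ provided $N\ge1$ is kept fixed after rescaling. Concretely, I would rerun the recurrence \eqref{eq:recurrence} for $\tilde u$ using compactness of $\Kcal$, obtaining $\sup_{v\in\Kcal}\|F(v)\|_1\lesssim_u1$. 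This gives
\[
 f(t)\lesssim_u N(t)^{-1/2},\qquad\text{hence}\qquad f(t)^2\lesssim_u N(t)^{-1}.
\]
Integrating yields $E_f(T)\lesssim_u K_N([0,T])$ for all $T$. If the uniform-in-$\Kcal$ version of Lemma~\ref{lem:l4} proves delicate, I would instead use $f\le F_*$ together with $N\ge1$ on intervals where $N\sim1$. In general, though, the scaling bound is what is needed.

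\emph{Step 2: the maximal function, which is the main obstacle.} The one-sided Hardy--Littlewood theorem on the whole half-line gives $\|F_+\|_{L^2(0,T)}^2\le\|\mathcal H_+(f1_{[0,T']})\|^2+\text{tail}$, and the tail from times beyond $T$ is not controlled by $K_N([0,T])$. We handle this by selecting good terminal times. Write $\Phi(T)=K_N([0,T])$, which is nondecreasing with $\Phi(T)\to\infty$.

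First suppose $\Phi(\infty)<\infty$. Then $f\in L^2(0,\infty)$ by Step 1, and the maximal inequality gives $\|F_+\|_{L^2(0,\infty)}^2\le4\|f\|_{L^2(0,\infty)}^2\lesssim\Phi(\infty)$. Any $T_j\to\infty$ works once $T_j$ is large enough that $\Phi(T_j)\ge\Phi(\infty)/2$. (Here $\Phi(\infty)>0$ because $N$ is finite.)

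Otherwise $\Phi(\infty)=\infty$. Choose $T_j$ to be "doubling-good" times, namely times at which $\Phi(2T)\le C\Phi(T)$ fails only rarely. For $t\le T$, split the average defining $F_+(t)$ into $s\le T$ and $s>T$. The part with $s\le T$ is bounded by $\mathcal H_+(f1_{[0,2T]})(t)$. The part with $s>T$ is bounded by $\sup_{s>T}s^{-1}\int_0^{2s}f\le\sup_{s>T}\bigl(2\Phi'(s)/s\bigr)^{1/2}$, using Cauchy--Schwarz with $\int_0^{2s}f\le(2s\,E_f(2s))^{1/2}$. Its square, integrated over $[0,T]$, is bounded by $T\sup_{s>T}\Phi(2s)/s$.

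Now select $T_j$ with $\sup_{s\ge T_j}\Phi(2s)/s\le C\Phi(T_j)/T_j$ and $\Phi(2T_j)\le C\Phi(T_j)$. Such times exist along a sequence tending to infinity: since $\Phi$ is nondecreasing with $\Phi(T)\lesssim T$ (because $N\ge1$), take $T_j$ to be record points of the "upper concave majorant" of $\Phi$, at which $\Phi(T)/T$ dominates all later ratios up to a constant. With this choice,
\[
 \|F_+\|_{L^2(0,T_j)}^2\lesssim E_f(2T_j)+\Phi(2T_j)\lesssim\Phi(T_j)\lesssim_uK_a([0,T_j]).
\]

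The delicate point I expect is justifying the existence of such record times when $\Phi$ grows irregularly. I would handle it by a dyadic pigeonhole argument on $\Phi(2^m)$: infinitely many $m$ must satisfy $\Phi(2^{m+1})\le 4\Phi(2^m)$, or else $\Phi$ would grow faster than $T^2$, contradicting $\Phi(T)\le T$. I would combine this with the sup-over-later-times condition, choosing the $m$ that maximizes $\Phi(2^m)2^{-m}$ over a future window.
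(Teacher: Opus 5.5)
\textbf{Step 1 has a genuine gap, and the rest of your argument depends on it.} The bound $f(t)\lesssim_u N(t)^{-1/2}$ is equivalent to $\sup_{v\in\Kcal}\|v\|_5^5<\infty$. Neither precompactness of $\Kcal$ nor Lemma~\ref{lem:l4} gives this.

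\begin{itemize}
\item Compact subsets of $\dot H^1(\R^3)$ need not be bounded in $L^5$. The low-frequency tail is only $o(1)$ in $\dot H^1$, with no rate.
\item Lemma~\ref{lem:l4} is not scale-invariant. From $f\le F_*$ you only get $\|F(\tilde u(t))\|_1=N(t)^{1/2}f(t)\le N(t)^{1/2}F_*$.
\item Rerunning the recurrence for the solution renormalized at a time $t_0$ does not work. The recurrence uses the no-waste formula over all future times, together with the smallness $\|\nabla P_{\le L_0}u\|_{L^\infty_tL^2_x}\le\eta$ uniformly on $[t_0,\infty)$. After rescaling by $N(t_0)$, this would require $\|\nabla P_{\le L_0N(t_0)}u(t)\|_2\le\eta$ for all $t\ge t_0$. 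That fails as soon as $N$ drops back to size $1$ after $t_0$, since the rescaled solution only has frequency scale $\ge N(t_0)^{-1}$.
\end{itemize}

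Concretely, at a time where $N(t_0)\gg1$, the low frequencies of $u(t_0)$ are fed by the forcing at later times where $N\sim1$. Their contribution to $\|u(t_0)\|_5^5$ is comparable to a future average of $f$, not $O(N(t_0)^{-1/2})$. So your reduction to $K_N$ targets a statement stronger than the proposition, with no available proof. Note that the paper itself only has $a(t)\le A_0$, never $a(t)\lesssim N(t)^{-1}$. Your fallback ($f\le F_*$ where $N\sim1$) does not cover oscillating $N$ either.

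\textbf{The missing idea is to keep the low-frequency contribution as a future maximal function rather than bound it pointwise, and to compare with $K_a$ directly.}

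\begin{itemize}
\item Split $u=h+l$. Then $\|h\|_5^5\le\|h\|_4^2\|h\|_6^3\lesssim_B a(t)^{1/2}$.
\item The frequency-localized no-waste formula, with the nonincreasing kernel $k_L(s)=L^{-1}\min\{L^3,s^{-3/2}\}$ and an integration by parts in $s$, gives $\sup_LL^{-1}\|P_Lu(t)\|_\infty\lesssim F_+(t)$.
\item By \eqref{eq:refined}, this gives $\|l\|_5^5\lesssim\eta_\nu^4F_+(t)$. Hence $f\le C_Ba^{1/2}+C\eta_\nu^4F_+$, which is \eqref{eq:f-absorb}.
\item Make the terminal-time selection on $E_f$ itself, not on $K_N$. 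Choose $T_j\ge R_j$ with $E_f(T_j)/T_j\ge\frac12\sup_{s\ge R_j}E_f(s)/s$. This is trivially possible because that supremum lies in $(0,F_*^2]$; no pigeonhole or concave-majorant argument is needed.
\item Your splitting of the averages defining $F_+$ into $s\le T$ and $s>T$ then gives $\|F_+\|_{L^2(0,T_j)}^2\le CE_f(T_j)$. The doubling $E_f(2T_j)\le4E_f(T_j)$ comes for free from the ratio condition.
\item Squaring and integrating \eqref{eq:f-absorb} yields $E_f(T_j)\le2C_B^2K_a([0,T_j])+C\eta_\nu^8E_f(T_j)$. Since $E_f(T_j)\le T_jF_*^2<\infty$ and $\eta_\nu\to0$, the last term is absorbed for $\nu\le\nu_0(u)$.
\end{itemize}

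The $T_j$ depend only on $f$, so they are independent of $\nu$ automatically.
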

\begin{proof}
By \eqref{eq:Fstar}, $0<f\le F_*$. We choose $R_j$ and $T_j$
successively so that
\[
 R_j\ge\max\{j,T_{j-1}+1\},\quad T_j\ge R_j,\qquad
 \frac{E_f(T_j)}{T_j}\ge\frac{1}{2}\sup_{s\ge R_j}\frac{E_f(s)}{s}>0.
\]
The supremum is finite. For $T=T_j$, the maximal theorem gives
\begin{align*}
 \int_0^T\sup_{0<s\le T}
       \left(\frac{1}{s}\int_t^{t+s}f\right)^2\dd t
 &\le\|\mathcal H_+(f1_{[0,2T]})\|_{L^2(\R)}^2
 \lesssim E_f(2T)\le4E_f(T),\\
 \left(\frac{1}{s}\int_t^{t+s}f\right)^2
 &\le\frac{E_f(t+s)}{s}
 \le\frac{2(t+s)}{s}\frac{E_f(T)}{T}
 \le4\frac{E_f(T)}{T}\qquad(0\le t\le T<s).
\end{align*}
Thus
\begin{equation}\label{eq:terminal-ratio}
 \frac{E_f(s)}{s}\le2\frac{E_f(T_j)}{T_j}\quad(s\ge T_j),\qquad
 \|F_+\|_{L^2(0,T_j)}^2\le C_{\max}E_f(T_j).
\end{equation}
Put
\[
 k_L(s)=L^{-1}\min\{L^3,s^{-\frac{3}{2}}\},\qquad
 F_t(s)=\int_0^sf(t+\tau)\dd\tau.
\]
Then $F_t(s)\le sF_+(t)$, and $k_L$ is nonincreasing with integral $3$
on $[0,\infty)$. The absolutely convergent frequency-localized
Duhamel formula \eqref{eq:no-waste} gives
\begin{align*}
 L^{-1}\|P_Lu(t)\|_\infty
 &\le C\int_0^\infty k_L(s)f(t+s)\dd s\\
 &=-C\int_0^\infty k_L'(s)F_t(s)\dd s\le3CF_+(t).
\end{align*}
Here $k_L(s)F_t(s)\to0$ at both endpoints. Hence
\begin{equation}\label{eq:future-maximal}
 \sup_LL^{-1}\|P_Lu(t)\|_\infty\lesssim F_+(t),
\end{equation}
and dyadic summation yields
\begin{equation}\label{eq:deep-low-H}
 \|l(t)\|_\infty\lesssim\nu F_+(t),\qquad
 \|\nabla l(t)\|_\infty\lesssim\nu^2F_+(t).
\end{equation}
Interpolation, \eqref{eq:refined}, and \eqref{eq:future-maximal} imply
\begin{gather}
 \|h\|_5^5\le\|h\|_4^2\|h\|_6^3\lesssim_Ba^{\frac{1}{2}},\quad
 \|l\|_4^2\lesssim\eta_\nu F_+(t),
 \quad\|l\|_6^3\lesssim\eta_\nu^3,\notag\\
 f(t)\le C_Ba(t)^{\frac{1}{2}}+C\eta_\nu^4F_+(t).\label{eq:f-absorb}
\end{gather}
At the selected times,
\[
 E_f(T_j)\le2C_B^2K_a([0,T_j])+C\eta_\nu^8C_{\max}E_f(T_j),\qquad
 E_f(T_j)\le T_jF_*^2<\infty.
\]
Taking $\nu_0$ so that
$\sup_{0<\nu\le\nu_0}C\eta_\nu^8C_{\max}\le\frac{1}{2}$ allows us to absorb
the last term. Equation \eqref{eq:terminal-ratio} then gives
\eqref{eq:good-time}.
\end{proof}

In particular, when we pair the low-frequency factor with the action,
\eqref{eq:deep-low-H} supplies the coefficients $M_\nu\nu=d_\nu$ and
$M_\nu^2\nu^2=d_\nu^2$. The cancellations in
\eqref{eq:mass-cancel}--\eqref{eq:Gcancel} will place precisely these
factors in the source estimate. Proposition~\ref{prop:good-time}
then converts the remaining product $fF_+$ into $K_a(I_j)$.

\section{Coercivity and the interaction radius}\label{sec:static}
We turn to the first two inequalities in \eqref{eq:three-estimates}.
The spatial estimate must control $a(t)=\|h(t)\|_4^4$, while the same
kernel must keep the negative part of the radius contribution small.
We begin with a smoothed version of $|x|$ for which directional
coercivity absorbs the focusing potential. Its positive density term
will then determine the interaction radius.

\subsection{Weighted coercivity}
For an even convex kernel, the phase contributes a nonnegative term.
Set $f=|h|$ and $V=\frac{p}{f^2}$ on $\{f>0\}$, with $V=0$ elsewhere. Then
\begin{gather*}
 \Ren(\partial_j\bar h\,\partial_kh)
 =\partial_jf\,\partial_kf+f^2V_jV_k,\qquad
 \|fV\|_2\le\|\nabla h\|_2,\\
 \mathcal P_k(h):=2\iint\rho(x)\rho(y)(V(x)-V(y))^{\mathsf T}
 D^2k(x-y)(V(x)-V(y))\dd x\dd y\ge0,\\
 \BB_k(h)=\BB_k(f)+\mathcal P_k(h).
\end{gather*}
The Sobolev chain rule gives the first identity almost everywhere;
symmetrization gives the last. For the kernels below,
all terms are finite by the translated Hardy inequality
\[
 \sup_x\int|x-y|^{-1}|h(y)|^2\dd y\le2\|h\|_2\|\nabla h\|_2.
\]

We average a one-dimensional Green function to construct the radial
kernel
\begin{equation}\label{eq:base-kernel}
 a_R(r)=r-2R+\frac{2R^2(1-e^{-\frac{r}{R}})}{r},\qquad a_R(0)=0.
\end{equation}
Direct differentiation gives, in the weak sense at the origin,
\begin{gather}\label{eq:base-bounds}
 |\nabla a_R|\le1,\qquad D^2a_R\ge0,\qquad
 |D^2a_R(x)|\lesssim|x|^{-1},\quad |D^3a_R(x)|\lesssim|x|^{-2},\\
 \Delta a_R=\frac{2(1-e^{-\frac{r}{R}})}{r},\qquad
 -\Delta^2a_R=\frac{2e^{-\frac{r}{R}}}{R^2r}.\notag
\end{gather}
There is no additional Dirac mass. In particular,
\begin{equation}\label{eq:Q}
 Q_R(h):=Q_{a_R}(h)
 =\iint\frac{2e^{-\frac{|x-y|}{R}}}{R^2|x-y|}|h(x)|^2|h(y)|^2\dd x\dd y
 =8\pi\int\frac{|\widehat{|h|^2}(\xi)|^2}{1+4\pi^2R^2|\xi|^2}\dd\xi,
\end{equation}
with the Fourier convention fixed in Section~\ref{subsec:intro-notation}.

The multiplier in \eqref{eq:Q} decreases at high frequencies, so
positivity of $Q_R$ alone does not give a lower bound for
$a=\int|\widehat\rho|^2$. We will choose $R$ from $\rho$ and modify
$a_R$ to a convex kernel $k_{R,A}$ so that
\begin{equation}\label{eq:density-to-quartic}
 \BB_{k_{R,A}}(h)\ge\frac{7}{24}Q_R(h),\qquad
 \kappa R\lesssim A^2Q_R(h),\qquad R\ge La.
\end{equation}
The construction below fixes $L$ first, then $\kappa$, and finally $A$.
With these choices, the inequalities give
$\BB_{k_{R,A}}(h)\gtrsim_u a$. In Lemma~\ref{lem:static}, we first
control the focusing potential while retaining enough of $Q_R$.
We then prove the comparisons in \eqref{eq:density-to-quartic} for
the chosen radius in Proposition~\ref{prop:clock-positive}.

\begin{lemma}[Uniform coercivity]\label{lem:static}
Let $\Kcal\subset\dot H^1$ be compact, not containing zero, and suppose that
\[
 \sup_{f\in\Kcal}E(f)<E(W),\qquad
 \sup_{f\in\Kcal}\|\nabla f\|_2<\|\nabla W\|_2.
\]
There exist $\eps_0,R_*,c>0$ such that, whenever $h\in H^1$ satisfies
\[
 \inf_{v\in\Kcal,\,y\in\R^3}\|h(\cdot+y)-v\|_{\dot H^1}<\eps_0,
\]
the estimate
\begin{equation}\label{eq:static}
 \BB_{a_R}(h)\ge\BB_{a_R}(|h|)\ge \frac{c}{R}+\frac{7}{12}Q_R(h)
\end{equation}
holds for every $R\ge R_*$. No uniform bound on $\|h\|_2$ is required.
\end{lemma}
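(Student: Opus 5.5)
The plan is to reduce \eqref{eq:static} to a weighted, one-directional version of the sharp Sobolev coercivity, by writing $a_R$ as an average of one-dimensional kernels. First, the inequality $\BB_{a_R}(h)\ge\BB_{a_R}(|h|)$ is immediate from the decomposition $\BB_k(h)=\BB_k(f)+\mathcal P_k(h)$ with $f=|h|$, since $a_R$ is even and convex by \eqref{eq:base-bounds}. So it suffices to treat real $f\ge0$.

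\textbf{Step 1: spherical averaging.} Write $a_R(x)=\int_{S^2}\phi_R(x\cdot\omega)\dd\sigma(\omega)$, where $\sigma$ is normalized surface measure and $\phi_R$ is an even convex one-dimensional kernel. Its second derivative $\phi_R''$ is, up to normalization, the one-dimensional Green function $e^{-|s|/R}$ averaged so as to reproduce \eqref{eq:base-kernel}. I would verify this representation by comparing $\Delta a_R$ and $-\Delta^2a_R$ from \eqref{eq:base-bounds}. Then $D^2a_R=\int\phi_R''(x\cdot\omega)\,\omega\omega^{\mathsf T}\dd\sigma$ and $\Delta a_R=\int\phi_R''(x\cdot\omega)\dd\sigma$. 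Setting $w_\omega(x)=\int\phi_R''((x-y)\cdot\omega)\rho(y)\dd y$, the non-$Q$ part of $\BB_{a_R}(f)$ becomes
\[
4\int_{S^2}\int w_\omega(x)\Big[|\partial_\omega f(x)|^2-\frac{1}{3}|f(x)|^6\Big]\dd x\dd\sigma(\omega).
\]

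\textbf{Step 2: insert the weight and use directional coercivity.} For each $\omega$, compare $\int w_\omega(|\partial_\omega f|^2-\frac{1}{3}f^6)$ with $\|\partial_\omega(\sqrt{w_\omega}f)\|_2^2-\frac{1}{3}\int w_\omega f^6$. The difference consists of terms $\int f^2(\partial_\omega\sqrt{w_\omega})^2$ together with a cross term. After integration by parts, the cross term gives $-\frac{1}{2}\int f^2\partial_\omega^2w_\omega$. The Green equation $\partial_\omega^2w_\omega=R^{-2}w_\omega-c\,\delta$-type term makes this a multiple of the $Q_R$ density plus a small $R^{-2}$ piece. I would then split space into a compact core $|x-x_0|\le C_1/N$ and its complement, where $x_0,N$ come from the $\Kcal$-profile that $h$ is close to.
\begin{itemize}
\item On the core, $w_\omega$ is nearly constant once $NR\ge R_*$ is large, because $\phi_R''$ varies on scale $R$. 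The anisotropic-scaling form of \eqref{eq:sharp} then gives
\[
\|\partial_\omega v\|_2^2-\frac{1}{3}\|v\|_6^6\ge c\|\partial_\omega v\|_2^2
\]
for $v$ in an $\eps_0$-neighbourhood of $\Kcal$, uniformly in $\omega$, by compactness and the strict gradient gap.
\item Off the core, compactness makes $\|f\|_{L^6}$ and $\|\nabla f\|_{L^2}$ small. The potential term $\int w_\omega f^6$ is then bounded by $\sup w_\omega\cdot o(1)$, and $\sup w_\omega$ is controlled through the translated Hardy inequality.
\end{itemize}

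\textbf{Step 3: bookkeeping.} Collecting terms, the retained positive part on the core is comparable to $w_\omega(x_0)\|\nabla f\|_{L^2(\text{core})}^2$. Using $\phi_R''(0)\sim 1/R$ and the $\dot H^1$ lower bound on $\Kcal$, this is $\gtrsim \frac{1}{R}\cdot(\text{core mass weight})$, which yields the $c/R$ term after normalization. The error terms from the weight derivatives are bounded by a fixed fraction $\frac{5}{12}$ of $Q_R(f)$, via the Green identity and Hardy's inequality. This leaves $\frac{7}{12}Q_R$.

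The main obstacle is Step 2 off the core. There $w_\omega$ depends on the full density $\rho$, whose mass is not uniformly bounded, so the weighted errors must be bounded by $Q_R$ itself rather than by $\|h\|_2$. I would try first to express every error through $\iint\phi_R''$-weighted $\rho\otimes\rho$ integrals, which $Q_R$ dominates via the Fourier formula \eqref{eq:Q}, and to use the $\dot H^1$ tail smallness only as a multiplicative factor.
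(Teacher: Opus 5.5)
Your outline matches the paper's in several places: the phase reduction, writing $a_R$ as a spherical average of one-dimensional Green kernels (so the weight is the plane-marginal potential $H=w_R*m$ and $Q_R$ enters as $\frac14J$, $J=\int m'H'$), near-constancy of $H$ on a core of size $C_0\ll R$ combined with the anisotropic-stretch form of sharp Sobolev, and $H(0)\gtrsim R^{-1}$ from the core mass. One correction: the directional inequality needs the energy gap, not only the gradient gap. Indeed $W_\lambda(s\omega+y)=\lambda^{1/2}W(\lambda s\omega+y)$ with $\lambda<1$ has gradient below $\|\nabla W\|_2$, yet $\|\partial_\omega W_\lambda\|_2^2-\frac13\|W_\lambda\|_6^6=0$.

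The genuine gap is the focusing potential off the core, which is exactly where mass-independence is at stake. Your bound $\int w_\omega f^6\le\sup w_\omega\cdot o(1)$ cannot be absorbed into $\sigma H(0)$. Any bound on $\sup_sH(s)$ involves the mass, e.g. $\frac{\|h\|_2^2}{2R}$, or the translated Hardy bound $2\|h\|_2\|\nabla h\|_2$ you cite. Moreover, $\sup_sH(s)/H(0)$ is unbounded on the $\eps_0$-neighbourhood. To see this, add to a compactly supported nearby profile the bump $\epsilon\mu^{1/2}\Phi(\mu(x-3\mu^{-1}\omega))$ with $\mu\to0$. Its $\dot H^1$ norm is $O(\epsilon)$. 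However, $H\sim\epsilon^2\mu^{-1}$ near $s=3\mu^{-1}$, while $H(0)$ stays $\sim R^{-1}$. So the tail potential, of size $\sim\epsilon^8\mu^{-1}$, overwhelms every core term. Only the tail's own weighted kinetic energy, of size $\sim\epsilon^4\mu^{-1}$, controls it.

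Your fallback also fails. Dominating errors by $\rho\otimes\rho$ integrals, hence by $Q_R$, does not reach this term: $\iint\Delta a_R(x-y)f(x)^6\rho(y)$ is an $f^6\otimes\rho$ integral, and $Q_R$ sees $f$ only through its plane marginals.

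What is missing is a mass-free weighted Sobolev inequality. It must absorb $\int H\beta_g$, with $g=\chi f$, into $Z=\int_{|x|>C_0}H|\partial_\omega f|^2$ plus a fraction of $J$, with a small constant depending only on $\|\nabla_\perp g\|_2^2\lesssim\delta$. The paper gets this from the change of longitudinal variable $\tau=\int_0^sH^{-1/2}$, $v=H^{1/4}g$. This gives $\|v\|_6^6=\int H\beta_g$ exactly and $\|\nabla_yv\|_2=\|\nabla_\perp g\|_2$. It also gives $\|\partial_\tau v\|_2^2=\int H\alpha_g+\frac14\int H'm_g'+\frac1{16}\int m_g(H')^2/H$. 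The stretched sharp Sobolev bound $\|v\|_6^6\le\frac{27}{4G^2}X^2Y$ then yields $\eta\lesssim\delta^2$. The remaining terms are handled without mass by $K\le J$, the Green equation, the Hardy-weighted tail $\int_{|x|>C_0}f^2/|x|^2\le\delta$, and $\int m_k\lesssim C_0^2A$, which together control the cutoff errors.

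Your $\sqrt{w_\omega}f$ insertion is not a substitute. First, $\|\sqrt{w_\omega}f\|_6^6=\int w_\omega^3f^6$ carries the wrong power of the weight. Second, the identity $\int H\alpha=\|\partial_\omega(\sqrt Hf)\|_2^2-\frac12J-\frac14K$ shows it spends more than the $\frac14J$ that $Q_R$ contributes. Finally, the $\frac{5}{12}$ of $Q_R$ lost in the paper is not an error from weight derivatives. It is precisely the charge $\frac{5\eta}{48}J$ (with $\eta\le1$) of this tail estimate against $\frac14J$.
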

The retained part of $Q_R$ absorbs the later kernel modification and,
at the radius selected below, controls $a(t)$. The particular fraction
$\frac{7}{12}$ has no intrinsic significance.
\begin{proof}
\emph{The directional inequality.}
By translation invariance, we may center $h$ near $\Kcal$. Put $f=|h|$
and first fix a neighborhood on which
\[
 E(f)\le E(h)\le E(W)-e_0,\qquad
 A(f)\le A(h)\le G-a_0,\qquad e_0,a_0>0.
\]
The directional gradient has a positive lower bound in a sufficiently
small neighborhood, uniformly in the direction. To see this, suppose
no such neighborhood exists. We can then choose centered $h_n\in H^1$,
$v_n\in\Kcal$ and $\omega_n\in S^2$ such that
\[
 \|h_n-v_n\|_{\dot H^1}<\frac{1}{n},
 \qquad \|\partial_{\omega_n}|h_n|\|_2<\frac{1}{n}.
\]
By compactness, a subsequence satisfies $h_n\to h_\infty\in\Kcal$
in $\dot H^1$ and $\omega_n\to\omega$. Thus $|h_n|\to|h_\infty|$
in $L^6$, while $\|\partial_{\omega_n}|h_n|\|_2\to0$ by construction.
It follows that $\partial_\omega|h_\infty|=0$ in distributions.
An $L^6(\R^3)$ function which is constant in one direction must vanish.
This contradicts $0\notin\Kcal$. Thus, uniformly in $f$ and $\omega$,
\[
 A_\omega:=\|\partial_\omega f\|_2^2\ge a_1>0,\qquad
 A_\perp:=\|\nabla_\perp f\|_2^2,\qquad A=A_\omega+A_\perp.
\]
Write $x=s\omega+y$, with $y\in\omega^\perp$. Stretching in the
$\omega$ direction preserves the transverse Dirichlet integral and
multiplies both the directional Dirichlet integral and the potential
energy by the same factor. More precisely, for $f_\lambda(s,y)=\lambda^{\frac{1}{2}}f(\lambda s,y)$,
\[
 \|\nabla f_\lambda\|_2^2=A_\perp+\lambda^2A_\omega,\qquad
 \|f_\lambda\|_6^6=\lambda^2B_6.
\]
Choosing $\lambda^2=1+\frac{G-A}{A_\omega}$ gives
$\|\nabla f_\lambda\|_2^2=G$. The sharp Sobolev inequality then implies
$\|f_\lambda\|_6^6\le G$ and $E(f_\lambda)\ge E(W)$. Since
\[
 E(f_\lambda)-E(f)
 =\frac{1}{2}(\lambda^2-1)(A_\omega-\frac{B_6}{3}),
\]
we obtain
\begin{equation}\label{eq:directionalstress}
 A_\omega-\frac{1}{3}B_6
 \ge\frac{2(E(W)-E(f))}{G-A}A_\omega
 \ge\frac{2e_0a_1}{G}=:\sigma>0.
\end{equation}

\emph{Plane marginals and the Green equation.}
We now transfer \eqref{eq:directionalstress} to the interaction weight.
For each direction $\omega$, define the plane marginals by
\begin{gather*}
 (m,\alpha,\beta)(s)=\int_{\omega^\perp}
 (f^2,|\partial_\omega f|^2,f^6)(s\omega+y)\dd y,
 \quad w_R(s)=\frac{e^{-\frac{|s|}{R}}}{2R},\quad H=w_R*m,
\\
 J=\int m'H',\quad K=\int m\frac{(H')^2}{H},\qquad
 L_\omega(f)=\int H\alpha-\frac{1}{3}\int H\beta+\frac{1}{4}J.
\end{gather*}
The one-dimensional Green kernel gives the representation
\begin{equation}\label{eq:spherical}
 \begin{gathered}
 A_R(s)=\frac{1}{2}(|s|-R+Re^{-\frac{|s|}{R}}),\qquad A_R''=w_R,\\
 a_R(x)=\pi^{-1}\int_{S^2}A_R(x\cdot\omega)\dd\omega,\quad
 \BB_{a_R}(f)=\frac{4}{\pi}\int_{S^2}L_\omega(f)\dd\omega,
 \quad Q_R(f)=\pi^{-1}\int_{S^2}J_\omega\dd\omega.
 \end{gathered}
\end{equation}
Here the subscript on $J_\omega$ records the direction used in the
marginals. By \eqref{eq:spherical}, it is enough to retain positive
multiples of $H(0)$ and $J$ in $L_\omega$. The Green equation gives
\begin{equation}\label{eq:green}
 H-R^2H''=m,\qquad |H'|\le \frac{H}{R},\qquad
 e^{-\frac{|s|}{R}}H(0)\le H(s)\le e^{\frac{|s|}{R}}H(0).
\end{equation}
Applying the Sobolev chain rule to the norm of the
$L^2(\omega^\perp)$-valued function $s\mapsto f(s\omega+\cdot)$,
followed by the one-dimensional Sobolev embedding, gives
\[
 \sqrt m\in H^1,\qquad |m'|^2\le4m\alpha,\qquad
 m\in H^1\cap L^1\cap L^\infty.
\]
Moreover, $H\in H^3$ is strictly positive and tends to zero at both ends
of the real line. Integration by parts therefore gives
\begin{align}
 J&=\int(H')^2+R^2\int(H'')^2\ge0,\label{eq:J}\\
 K&=\int(H')^2-\frac{R^2}{3}\int\frac{(H')^4}{H^2}\le J.
                                                       \label{eq:K}
\end{align}
For the second identity, we use
\begin{gather*}
 \int H''\frac{(H')^2}{H}=\frac{1}{3}\int\frac{(H')^4}{H^2},\qquad
 \frac{(H')^4}{H^2}\le R^{-4}H^2,\\
 \left|H''\frac{(H')^2}{H}\right|\le R^{-2}|H''|H,\qquad
 \left|\frac{(H')^3}{H}\right|\le R^{-3}H^2\to0.
\end{gather*}
These identities use the finiteness of the mass of each $h$, but their
constants do not involve its size.

\emph{The weighted tail estimate.}
Compactness makes the tail small in $\dot H^1$ and in the weighted
$L^2$ norm $\|v/|x|\|_2$. The mass of that tail need not be small, so
we use a weighted Sobolev estimate that depends only on its gradient.
With $\mathcal T_C(v)=\int_{|x|>C}(|\nabla v|^2+\frac{|v|^2}{|x|^2})$,
compactness, the diamagnetic inequality and Hardy's inequality give
\[
 \mathcal T_C(f)\lesssim\eps_0^2+\sup_{v\in\Kcal}\mathcal T_C(v),
 \qquad\sup_{v\in\Kcal}\mathcal T_C(v)\to0.
\]
We choose $\delta$ in terms of $\sigma$, shrink $\eps_0$, and then choose $C_0$ so that
\begin{equation}\label{eq:tail}
 \int_{|x|>C_0}\left(|\nabla f|^2+\frac{f^2}{|x|^2}\right)\dd x
 \le\delta.
\end{equation}
Let $\chi$ be a cutoff with $0\le\chi\le1$, equal to zero on
$B(0,C_0)$ and to one for $|x|\ge2C_0$, and satisfying
$\|\nabla\chi\|_\infty\lesssim C_0^{-1}$. Setting $g=\chi f$ gives
$\|\nabla g\|_2^2\lesssim\delta$.
Define $m_g,\alpha_g,\beta_g$ by replacing $f$ with $g$. Then
\[
 m_k:=m-m_g\ge0,\qquad\supp m_k\subset[-2C_0,2C_0],
 \qquad\int m_k\lesssim C_0^2A.
\]
To estimate the weighted potential energy, we change the longitudinal
coordinate and rescale $g$. This preserves its transverse Dirichlet
integral and turns $\int H\beta_g$ into an unweighted $L^6$ integral.
Specifically, set
\[
 \tau(s)=\int_0^sH(\zeta)^{-\frac{1}{2}}\dd\zeta,\quad
 v(\tau,y)=H(s)^{\frac{1}{4}}g(s,y),\quad
 \partial_\tau v=H^{\frac{3}{4}}\partial_sg+\frac{1}{4}H^{-\frac{1}{4}}H'g.
\]
Since $H$ is positive and bounded, $\tau(s)\to\pm\infty$ as
$s\to\pm\infty$. Moreover,
\begin{equation}\label{eq:transform}
 \begin{gathered}
 \|v\|_2=\|g\|_2,\quad
 X:=\|\nabla_yv\|_2^2=\|\nabla_\perp g\|_2^2\lesssim\delta,
 \quad\|v\|_6^6=\int H\beta_g,\\
 Y:=\|\partial_\tau v\|_2^2
 =\int H\alpha_g+\frac{1}{4}\int H'm_g'+\frac{1}{16}\int m_g\frac{(H')^2}{H}
 <\infty.
 \end{gathered}
\end{equation}
The change of variables is a diffeomorphism of $\R$, and the derivative
formula is valid locally in the Sobolev sense. Since $H$ is bounded and
$|H'|\le \frac{H}{R}$, the right-hand side defining $Y$ is finite.
Together with the first line of \eqref{eq:transform}, this shows
$v\in H^1(\R^3)$, so the sharp Sobolev inequality applies after a stretch in
the $\tau$ direction.
If $g=0$, the resulting bound is immediate;
otherwise $X,Y>0$ and optimization gives
\[
 \|v\|_6^6\le G^{-2}\inf_{\lambda>0}\lambda^{-2}(X+\lambda^2Y)^3
             =\frac{27}{4G^2}X^2Y,
 \qquad \lambda^2=\frac{X}{2Y}.
\]
Consequently,
\begin{equation}\label{eq:weightedGN}
 \int H\beta_g\le\eta\left[\int H\alpha_g+\frac{1}{4}\int H'm_g'
                  +\frac{1}{16}\int m_g\frac{(H')^2}{H}\right],
 \qquad\eta\lesssim\delta^2.
\end{equation}
Equations \eqref{eq:green}--\eqref{eq:K} imply
\begin{equation}\label{eq:cutoffcancel}
 \int m_g\frac{(H')^2}{H}\le K\le J,\qquad
 \int H'm_g'=J-R^{-2}\int(m-H)m_k
                         \le J+R^{-2}\int Hm_k.
\end{equation}
Set $Z=\int_{|x|>C_0}H(x\cdot\omega)|\partial_\omega f(x)|^2\dd x$.
The cutoff error vanishes outside $C_0<|x|<2C_0$. For $R\ge2C_0$,
\begin{align*}
 \int H\alpha_g
 &\le2Z+C e^{\frac{2C_0}{R}}H(0)C_0^{-2}
                      \int_{C_0<|x|<2C_0}f^2
 \le2Z+C\delta H(0),\\
 R^{-2}\int Hm_k
 &\le C e^{\frac{2C_0}{R}}H(0)\frac{C_0^2A}{R^2}.
\end{align*}
Substitution in \eqref{eq:weightedGN}--\eqref{eq:cutoffcancel} gives
\begin{equation}\label{eq:tailweighted}
 \int H\beta_g\le\eta\left[2Z+\frac{5}{16}J
                     +CH(0)(\delta+\frac{C_0^2A}{R^2})\right],
\end{equation}
with $C$ independent of $C_0$.

\emph{Conclusion of the coercivity estimate.}
On the core, the weight differs from $H(0)$ by a factor tending to one
as $R$ increases. In particular,
\begin{gather*}
 \int H\alpha\ge e^{-\frac{C_0}{R}}H(0)(A_\omega-\delta)+Z,\qquad
 \int H\beta\le e^{\frac{2C_0}{R}}H(0)B_6+\int H\beta_g.
\end{gather*}
\begin{align*}
 L_\omega(f)\ge{}&H(0)\left[
 e^{-\frac{C_0}{R}}(A_\omega-\delta)-\frac{1}{3}e^{\frac{2C_0}{R}}B_6
                       -C\eta(\delta+\frac{C_0^2A}{R^2})\right]\\
             &+(1-\frac{2\eta}{3})Z+(\frac{1}{4}-\frac{5\eta}{48})J.
\end{align*}
With $\delta$ small as above, we increase $R_*$ so that
\[
 \eta\le1,\qquad
 C[(\frac{C_0}{R})A+\delta+\eta\delta+\eta \frac{C_0^2A}{R^2}]\le\frac{\sigma}{2}
                 \quad(R\ge R_*).
\]
Then
\begin{equation}\label{eq:strongL}
 L_\omega(f)\ge\frac{\sigma}{2}H(0)+\frac{1}{3}Z+\frac{7}{48}J.
\end{equation}
We finally bound $H(0)$ from below. Since $\Kcal$ is compact and
does not contain zero, there is a radius $R_c$ such that
\[
 \inf_{v\in\Kcal}\|v\|_{L^2(B(0,R_c))}>0.
\]
For centered $h$ and a nearby $v\in\Kcal$, Sobolev's inequality gives
\[
 \|f-|v|\|_{L^2(B(0,R_c))}\lesssim R_c\|h-v\|_{\dot H^1}
 \lesssim R_c\eps_0.
\]
Shrinking $\eps_0$ once more and increasing $R_*$ to at least $R_c$, we
obtain
\[
 \inf_f\int_{|x|\le R_c}f^2\ge m_0>0,\qquad
 H_\omega(0)\ge e^{-\frac{R_c}{R}}\frac{m_0}{2R}\gtrsim R^{-1}.
\]
All the preceding choices depend only on $\Kcal$. We first fix the
threshold and directional gaps, then the tail size and neighborhood,
and finally the lower bound on $R$. The phase identity,
\eqref{eq:spherical} and
\eqref{eq:strongL} now yield
\begin{equation}\label{eq:Qcoercive}
 \BB_{a_R}(h)\ge\BB_{a_R}(f)\ge \frac{c}{R}+\frac{7}{12}Q_R(h).
 \qedhere
\end{equation}
\end{proof}

\subsection{The interaction radius}
Scaling gives
\[
 \BB_{a_R}(N^{\frac{1}{2}}f(N\cdot))=N^{-1}\BB_{a_{NR}}(f),\qquad
 Q_R(N^{\frac{1}{2}}f(N\cdot))=N^{-1}Q_{NR}(f).
\]
Thus the scaled form of Lemma~\ref{lem:static} requires $R\ge \frac{R_*}{N}$.
The modification below produces an integrable kernel for the radius
derivative and allows us to impose this lower bound through the density.
Fix $A\ge2$, put $\delta=A^{-1}$, and define
\begin{align*}
 k_{R,A}(r)&=\frac{a_R(r)-A^{-2}a_{AR}(r)}{1-A^{-2}}=RK_A(\frac{r}{R})\\
 &=r-\frac{2R}{1+A^{-1}}
   +\frac{2R^2}{(1-A^{-2})r}(e^{-\frac{r}{AR}}-e^{-\frac{r}{R}}).
\end{align*}
The $\frac{R^2}{r}$ terms cancel. Consequently,
\begin{equation}\label{eq:ir1}
 K_A''(s)=\frac{2}{1-\delta^2}\int_\delta^1t^2e^{-ts}\dd t>0,
 \qquad K_A(0)=K_A'(0)=0,\quad K_A'(\infty)=1,
\end{equation}
and, uniformly for $A\ge2$,
\[
 D^2k_{R,A}\ge0,\quad|\nabla k_{R,A}|\le1,\quad
 |D^2k_{R,A}(x)|\lesssim|x|^{-1},\quad
 |D^3k_{R,A}(x)|\lesssim|x|^{-2}.
\]
Derivatives at the origin are weak derivatives. By \eqref{eq:Q},
\begin{equation}\label{eq:ir2}
 \begin{gathered}
 Q_{k_{R,A}}(h)=8\pi\int
 \frac{1+4\pi^2(A^2+1)R^2|\xi|^2}{(1+4\pi^2R^2|\xi|^2)(1+4\pi^2A^2R^2|\xi|^2)}|\widehat\rho(\xi)|^2\dd\xi,\\
 Q_R\le Q_{k_{R,A}}\le(1+A^{-2})Q_R.
 \end{gathered}
\end{equation}
For the radius derivative, we use the integrable kernel
\begin{equation}\label{eq:ir3}
 \psi_A(s)=K_A(s)-sK_A'(s)+\frac{2}{1+\delta}
       =\frac{2}{1-\delta^2}\int_\delta^1(1+ts)e^{-ts}\dd t>0,
\end{equation}
\begin{equation}\label{eq:ir4}
 \psi_A(|\cdot|)\in L^1\cap L^2(\R^3),\qquad
 \partial_R\nabla k_{R,A}(z)=\nabla[\psi_A(\frac{|z|}{R})].
\end{equation}
Its convolution multiplier is
\begin{equation}\label{eq:ir5}
 \Psi_A(\zeta):=\int_{\R^3}e^{-2\pi\ii x\cdot\zeta}\psi_A(|x|)\dd x
 =\frac{64\pi}{1-\delta^2}\int_\delta^1
                         \frac{t^3}{(t^2+4\pi^2|\zeta|^2)^3}\dd t,
 \qquad\Psi_A(0)=32\pi A^2.
\end{equation}
Its radial profile satisfies
\[
 \Psi_A(s)>0,\qquad\Psi_A'(s)<0\ (s>0),\qquad
 |s\Psi_A'(s)|\le6\Psi_A(s).
\]

For $\rho=|h|^2$, define
\[
 C(R,\rho)=\iint\rho(x)\rho(y)\psi_A(\frac{|x-y|}{R})\dd x\dd y.
\]
The equation $C(R,\rho)=\kappa R^4$ respects critical scaling. For
$h_\lambda(x)=\lambda^{\frac{1}{2}}h(\lambda x)$, we have
\[
 C(\frac{R}{\lambda},|h_\lambda|^2)=\lambda^{-4}C(R,|h|^2),\qquad
 \|h_\lambda\|_4^4=\lambda^{-1}\|h\|_4^4.
\]
The exponent $4$ also makes the root nondegenerate, allowing us to
differentiate the radius along the continuity equation.

\begin{lemma}\label{lem:clock-root}
Let $h\in H^1\setminus\{0\}$, $a=\|h\|_4^4$, and $\kappa>0$.
There is a unique positive radius satisfying
\begin{equation}\label{eq:ir6}
 C(R,|h|^2)=\kappa R^4.
\end{equation}
It obeys
\begin{equation}\label{eq:ir7}
 T_R:=4\kappa R^3-\partial_RC(R,|h|^2)\ge\kappa R^3>0.
\end{equation}
Under $\|\nabla h\|_2\le B$, the radius satisfies
\[
 c_{A,\kappa,B}a\le R\le C_{A,\kappa}a.
\]
Moreover, for any $L>0$, we can choose $\kappa$ sufficiently small,
depending only on $B,L$, so that $R\ge La$ uniformly for $A\ge2$.
\end{lemma}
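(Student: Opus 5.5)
Set $\phi(R)=C(R,|h|^2)R^{-4}$. Existence, uniqueness and \eqref{eq:ir7} then reduce to monotonicity of $\phi$ together with its limits at $0$ and $\infty$.

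\emph{Monotonicity and the root bound.} Since $\psi_A>0$ and $\psi_A'(s)=-sK_A''(s)\le0$ by \eqref{eq:ir1}, the derivative
\[
 \partial_RC=\iint\rho\rho\,\psi_A'(\tfrac{|x-y|}{R})\big(-\tfrac{|x-y|}{R^2}\big)\ge0
\]
is nonnegative but possibly large. I would therefore use the multiplier form instead:
\[
 C(R,\rho)=R^3\int\Psi_A(R\xi)|\widehat\rho(\xi)|^2\dd\xi .
\]
This gives
\[
 \partial_R(R^{-4}C)=R^{-2}\int[-\Psi_A(R\xi)+R\xi\cdot\nabla\Psi_A(R\xi)]|\widehat\rho|^2\dd\xi<0,
\]
because $\Psi_A>0$ and $\Psi_A'<0$. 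So $\phi$ is strictly decreasing. The stated bound $|s\Psi_A'|\le6\Psi_A$ yields $R\partial_RC\le 3C$, the $R^3$ factor contributing $3C$ and the derivative term being nonpositive. At a root this gives $\partial_RC\le3\kappa R^3$, hence $T_R\ge\kappa R^3$. As $R\to0$, $C\to\psi_A(0)\|h\|_2^4>0$, so $\phi\to\infty$. As $R\to\infty$, $C\le R^3\Psi_A(0)\|\rho\|_2^2=32\pi A^2R^3a$, so $\phi\to0$. Continuity and strict decrease give a unique root.

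\emph{Upper bound.} The relation $\kappa R^4=C\le32\pi A^2R^3a$ gives directly $R\le32\pi A^2\kappa^{-1}a$.

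\emph{Lower bound and the choice of $\kappa$.} This is the main step: I need $C(R,\rho)\gtrsim R^3a$ whenever $R\lesssim a$ up to constants depending on $B$ and $\kappa$. Because $\Psi_A$ decreases, I only have $\Psi_A(R\xi)\ge\Psi_A(s_0)$ for $|\xi|\le s_0/R$. Here $\Psi_A(s_0)\gtrsim 1$ uniformly in $A$, since the $t\in[1/2,1]$ portion of the integral already contributes. Hence
\[
 C\ge c R^3\int_{|\xi|\le s_0/R}|\widehat\rho|^2 ,
\]
and it remains to bound the high-frequency part of $\rho$. Using Sobolev-type control of $\rho=|h|^2$ with $\|\nabla h\|_2\le B$,
\[
 \int_{|\xi|>M}|\widehat\rho|^2\le M^{-1}\|\rho\|_{\dot H^{1/2}}^2\lesssim M^{-1}\|h\|_{4}^2\|\nabla h\|_2^2 ,
\]
via $\|\rho\|_{\dot H^{1/2}}\lesssim\|\nabla\rho\|_{L^{3/2}}\le2\|h\|_6\|\nabla h\|_2\lesssim B^2$. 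This suggests the tail is at most $C_BM^{-1}$. In the proof I will bound the tail by $C_BR\,s_0^{-1}$ and then split cases: if $R\le c_Ba$, the tail is at most $a/2$, and so $\kappa R^4=C\ge cR^3a/2$, giving $R\ge c\,a/(2\kappa)$. Choosing $\kappa\le\min\{c_B,\,c/(2L)\}\cdot(\text{const})$ contradicts $R<\min(c_B,L)a$, hence $R\ge\min(c_B,L)a$ in all cases. Since every constant here is uniform in $A\ge2$, taking $\kappa$ small in terms of $B,L$ gives $R\ge La$. For general $\kappa$ the same argument gives $R\ge c_{A,\kappa,B}a$. The delicate point is making the high-frequency tail estimate of $\widehat\rho$ depend only on $B$ and scale correctly with $a$. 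If the $\dot H^{1/2}$ bound does not scale as needed, I would fall back on interpolating $\|\rho\|_{\dot H^{s}}$, $0<s<1$, between $\|\rho\|_2^2=a$ and $\|\nabla\rho\|_{3/2}\lesssim B^2$.
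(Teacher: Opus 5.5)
\textbf{The gap: the cutoff $s_0$ must depend on $L$.} Your route is the paper's: the multiplier form of $C$, radial decrease of $\Psi_A$ for monotonicity and for $\partial_RC\le 3C/R$, Chebyshev with $\|\rho\|_{\dot H^{1/2}}^2\lesssim B^4$, and the uniform-in-$A$ lower bound on $\Psi_A$ from $t\in[\frac12,1]$. Existence, uniqueness, $T_R\ge\kappa R^3$, the upper bound and $R\ge c_{A,\kappa,B}\,a$ all go through.

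The final assertion, however, does not follow as written. You fix $s_0$, asking only that $\Psi_A(s_0)\gtrsim 1$. Then the threshold $c_B=s_0/(2C_B)$ is a fixed number, and your case split gives only $R\ge \min\{c_B,\,c/(2\kappa)\}\,a$. Shrinking $\kappa$ improves only the second entry, so for $L>c_B$ you never get past $R\ge c_B a$. The step from $\min(c_B,L)$ to $L$ is therefore unjustified. This is the case needed later, since $L\ge R_*/c_{\Kcal}$ may be large.

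The repair is to take $s_0=2C_BL$. Then $R\le La$ forces $\int_{|\xi|>s_0/R}|\widehat\rho|^2\le C_BR/s_0\le a/2$, hence $\kappa R^4=C\ge R^3\Psi_A(s_0)\,a/2$. Now choose $\kappa<\frac{1}{2L}\inf_{A\ge2}\Psi_A(2C_BL)$, which is positive by your $t\in[\frac12,1]$ bound. Then $R\le La$ would give $R>La$, a contradiction. The paper avoids the case split altogether: it fixes the frequency cutoff at $|\xi|\le 2CB^4/a$, independent of $R$. The root equation then reads $\kappa x\ge\frac12\Psi_A(2CB^4x)$ with $x=R/a$; the left side increases and the right side decreases in $x$, which handles every $L$ at once.

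\textbf{Smaller corrections.}
\begin{itemize}
\item As $R\to0$, $C(R,\rho)\to0$, not $\psi_A(0)\|h\|_2^4$. Indeed $\psi_A(s)\to0$ as $s\to\infty$, and $\psi_A(0)\|h\|_2^4$ is the limit as $R\to\infty$. Your conclusion $\phi\to\infty$ is still true, because $\phi(R)=R^{-1}\int\Psi_A(R\xi)|\widehat\rho|^2$ and the integral tends to $\Psi_A(0)a>0$.
\item The displayed tail bound $M^{-1}\|h\|_4^2\|\nabla h\|_2^2$ is not scale-invariant. It should read $M^{-1}\|h\|_6^2\|\nabla h\|_2^2\lesssim M^{-1}B^4$, which is what you actually use. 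This bound scales correctly, so the interpolation fallback is unnecessary.
\item The inequality $\partial_RC\le3C/R$ uses only $\Psi_A'\le0$, not $|s\Psi_A'|\le6\Psi_A$.
\end{itemize}
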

\begin{proof}
With $\rho=|h|^2$, Plancherel gives
\[
 C(R,\rho)=R^3\int\Psi_A(R\xi)|\widehat\rho(\xi)|^2\dd\xi,
 \qquad\frac{C}{R^4}=R^{-1}\int\Psi_A(R\xi)|\widehat\rho(\xi)|^2\dd\xi.
\]
Equation \eqref{eq:ir5} implies
\begin{gather*}
 \lim_{R\to0}\int\Psi_A(R\xi)|\widehat\rho|^2=\Psi_A(0)a>0,
 \quad0<\frac{C}{R^4}\le\frac{\Psi_A(0)a}{R},\quad\partial_R(\frac{C}{R^4})<0,\\
 \partial_RC\le\frac{3C}{R},\qquad
 T_R\ge\kappa R^3,\qquad R\le\frac{\Psi_A(0)a}{\kappa}.
\end{gather*}
These prove existence, uniqueness and the upper bound.
Fractional integration and Sobolev give
\[
 2\pi\int|\xi||\widehat\rho|^2
 =\|\rho\|_{\dot H^{\frac{1}{2}}}^2
 \lesssim\|\nabla\rho\|_{\frac{3}{2}}^2
 \lesssim\|h\|_6^2\|\nabla h\|_2^2\le C B^4,
 \qquad\int_{|\xi|\le\frac{2CB^4}{a}}|\widehat\rho|^2\ge \frac{a}{2}.
\]
Writing $x=\frac{R}{a}$, we obtain from the root equation
\begin{equation}\label{eq:ir8}
 \kappa x\ge\frac{1}{2}\Psi_A(2CB^4x)>0.
\end{equation}
Hence $x$ is bounded away from zero. Uniformly for $A\ge2$,
\[
 \Psi_A(s)\ge64\pi\int_{\frac{1}{2}}^1
                    \frac{t^3}{(t^2+4\pi^2s^2)^3}\dd t>0.
\]
We choose $\kappa L<\frac{1}{2}\inf_{A\ge2}\Psi_A(2CB^4L)$.
Equation \eqref{eq:ir8} and monotonicity then imply $x>L$.
\end{proof}

For $h=P_{>\nu}u$, the continuity equation contains a mass source.
Differentiating the implicit radius equation gives a lower bound
for the radius contribution in terms of this source.

\begin{lemma}\label{lem:signed-radius}
Let $I$ be an interval. Assume that $h\in C(I;H^1)$, that
$h(t)\ne0$ for every $t\in I$, and that
$r\in L^\infty_{\mathrm{loc}}(I;L^1)$. Suppose also that $\rho=|h|^2$ and $p=\Imn(\bar h\nabla h)$ satisfy
\begin{equation}\label{eq:ir9}
 \rho_t=-2\Div p+r
\end{equation}
in $\mathcal D'(I\times\R^3)$, and let $R(t)$ be the root in \eqref{eq:ir6}. Put
\[
 \psi_R(x)=\psi_A(\frac{|x|}{R}),\qquad
 M_\psi=2\int p\cdot(\nabla\psi_R*\rho),\qquad S=\int r(\psi_R*\rho).
\]
Then $R\in W^{1,\infty}_{\mathrm{loc}}(I)$. With $T_R$ as in
\eqref{eq:ir7}, we have almost everywhere
\begin{equation}\label{eq:ir10}
 D:=R'M_\psi=\frac{2(M_\psi+\frac{S}{2})^2-\frac{S^2}{2}}{T_R}
                         \ge-\frac{S^2}{2T_R}.
\end{equation}
In particular,
\begin{equation}\label{eq:ir11}
 D\ge-C_{A,\kappa}\|h\|_4^4\|r\|_1^2.
\end{equation}
If $r=0$, then $D\ge0$.
\end{lemma}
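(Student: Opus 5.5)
The plan is to differentiate the root equation \eqref{eq:ir6} implicitly, using the nondegeneracy \eqref{eq:ir7}. Set $\Phi(t,R)=\kappa R^4-C(R,\rho(t))$, so that $R(t)$ is the unique zero of $\Phi(t,\cdot)$ and $\partial_R\Phi=T_R\ge\kappa R^3>0$.

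\textbf{Step 1: regularity of $t\mapsto C(R,\rho(t))$.} First show that this map is locally Lipschitz, uniformly for $R$ in compact subsets of $(0,\infty)$, with
\[
 \partial_tC(R,\rho)=2\int\rho_t\,(\psi_R*\rho)=4\int p\cdot(\nabla\psi_R*\rho)+2S=2M_\psi+2S.
\]
This uses the symmetry of the quadratic form and the continuity equation \eqref{eq:ir9} tested against $\psi_R*\rho$. To justify it, mollify in space and time. The needed bounds are:
\begin{itemize}
\item $\psi_R*\rho\in L^\infty$, since $\psi_A(|\cdot|)\in L^2$ and $\rho\in L^2$;
\item $\nabla\psi_R*\rho\in L^\infty$, since $\psi_A'$ is bounded and integrable;
\item $p\in L^\infty_tL^1_x$ locally, since $h\in C(I;H^1)$;
\item $r\in L^\infty_{\mathrm{loc}}L^1$.
\end{itemize}
Together these make $\partial_tC$ locally bounded, so $C$ is Lipschitz in $t$. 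We also need the $R$-derivative $\partial_RC$ to be continuous jointly in $(t,R)$. This follows from $\partial_R\psi_R=-R^{-1}s\psi_A'(s)$ with $s\psi_A'$ integrable, together with $\rho\in C(I;L^2)$.

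\textbf{Step 2: Lipschitz radius and the derivative formula.} By the implicit function theorem in its Lipschitz form, $R$ is continuous, since the root is unique and $T_R>0$. From
\[
 \Phi(t,R(t))-\Phi(s,R(s))=0
\]
and the mean value theorem in $R$, we get $|R(t)-R(s)|\lesssim|t-s|$ locally, so $R\in W^{1,\infty}_{\mathrm{loc}}$. At points of differentiability,
\[
 T_RR'=\partial_tC=2M_\psi+2S.
\]
With the factor conventions of $M_\psi$ and $S$ this reads $R'=(2M_\psi+S)/T_R$. I will recheck the normalization against \eqref{eq:ir10}: the stated identity is equivalent to $R'T_R=2M_\psi+S$, so the constants in $\partial_tC$ must be checked carefully (the $2\int r\psi_R*\rho$ versus $S$ convention). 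This check is the likely place for a slip.

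\textbf{Step 3: completing the square.} Multiplying by $M_\psi$ gives
\[
 D=\frac{2M_\psi^2+SM_\psi}{T_R}=\frac{2(M_\psi+S/2)^2-S^2/2}{T_R},
\]
which yields \eqref{eq:ir10}, and $D\ge0$ when $r=0$.

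\textbf{Step 4: the bound \eqref{eq:ir11}.} Use
\[
 |S|\le\|r\|_1\|\psi_R*\rho\|_\infty,\qquad \|\psi_R*\rho\|_\infty\le\|\psi_R\|_2\|\rho\|_2=R^{3/2}\|\psi_A\|_{L^2}a^{1/2}.
\]
Hence
\[
 \frac{S^2}{2T_R}\lesssim_A\frac{R^3a\|r\|_1^2}{\kappa R^3}=C_{A,\kappa}\,a\,\|r\|_1^2.
\]

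The main obstacle is Step 1, the rigorous justification of the time derivative at $H^1$ regularity. The continuity equation holds only distributionally, and the quadratic functional must be differentiated with uniformity in $R$ near $R(t)$, which is what the Lipschitz implicit function argument requires.
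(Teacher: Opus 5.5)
Your strategy matches the paper's proof. You mollify the continuity equation to obtain $\partial_tC(t,R)=2M_\psi+2S$, with bounds uniform for $R$ in a compact range. You then use the nondegeneracy of the root to get $R\in W^{1,\infty}_{\mathrm{loc}}$ and $T_RR'=\partial_tC$, complete the square, and bound $|S|\le\|r\|_1\|\psi_R\|_2\|\rho\|_2\lesssim_A R^{3/2}a^{1/2}\|r\|_1$.

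However, Steps 2--3 as written contain an error. Step 1 is right, and the implicit chain rule gives $T_RR'=\partial_tC=2M_\psi+2S$, which is exactly the relation the paper uses. Your ``recheck'' replaces this by $T_RR'=2M_\psi+S$, which contradicts Step 1. Step 3 then asserts
\[
 2M_\psi^2+SM_\psi=2\Big(M_\psi+\frac{S}{2}\Big)^2-\frac{S^2}{2},
\]
which is false, since the right-hand side equals $2M_\psi^2+2SM_\psi$. Your claim that \eqref{eq:ir10} is equivalent to $R'T_R=2M_\psi+S$ is therefore also wrong: wherever $M_\psi\ne0$, it is equivalent to $R'T_R=2M_\psi+2S$. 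The two slips cancel, which is why you still land on the stated formula. With $2M_\psi+S$, completing the square would instead give $2(M_\psi+\frac{S}{4})^2-\frac{S^2}{8}$.

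The fix is to keep the relation from Step 1:
\[
 D=R'M_\psi=\frac{2M_\psi^2+2SM_\psi}{T_R}=\frac{2\bigl(M_\psi+\frac{S}{2}\bigr)^2-\frac{S^2}{2}}{T_R},
\]
which is \eqref{eq:ir10}. The case $r=0$ and \eqref{eq:ir11} then follow as in your Steps 3--4. With this correction, your proposal coincides with the paper's argument.
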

\begin{proof}
On a compact interval $I_0\Subset I$, the root is continuous by uniqueness and
$\rho\in C(I_0;L^1\cap L^2)$. Since $a(t)>0$ on $I_0$, the radius
stays in a compact interval $[R_-,R_+]\subset(0,\infty)$.
To differentiate at a fixed $R$, we first mollify $\rho,p,r$ in space
and time on a slightly larger compact interval. The mollified fields
satisfy the continuity equation. The symmetry of $\psi_R$ allows us to
differentiate its quadratic form. Passing to the limit using the
boundedness of $\psi_R$ and $\nabla\psi_R$ and the $L^1$ convergence
on compact time intervals, we obtain
\begin{align*}
 \partial_tC(t,R)&=4\int p\cdot(\nabla\psi_R*\rho)
                      +2\int r(\psi_R*\rho)=2M_\psi+2S,\\
 |\partial_tC(t,R)|&\le4\|\nabla\psi_R\|_\infty\|p\|_1\|\rho\|_1
                +2\|\psi_R\|_\infty\|r\|_1\|\rho\|_1\le C_{I_0}.
\end{align*}
The bounds are uniform for $R$ in a compact subinterval of
$(0,\infty)$ and give $C(\cdot,R)\in W^{1,\infty}(I_0)$.
Since $C(t,\cdot)\in C^1$ and $T_R(t)\ge\kappa R_-^3>0$,
the nondegeneracy of the root gives $R\in W^{1,\infty}(I_0)$.
The implicit chain rule yields
\[
 T_R\,R'=2M_\psi+2S,\qquad
 R'M_\psi=\frac{2M_\psi^2+2SM_\psi}{T_R}.
\]
Equations \eqref{eq:ir4}, \eqref{eq:ir7} and Cauchy--Schwarz yield
\[
 |S|\le\|r\|_1\|\psi_R*\rho\|_\infty
 \le\|r\|_1\|\psi_R\|_2\|\rho\|_2
 \le C_AR^{\frac{3}{2}}a^{\frac{1}{2}}\|r\|_1,
 \qquad\frac{S^2}{2T_R}\le C_{A,\kappa}a\|r\|_1^2.
\]
Completing the square proves both conclusions.
\end{proof}

For the fixed projection, the mass source has the following bound.
Set $g=P_{\le\nu}F(u)$. Since $u_t=\ii\Delta u+\ii F(u)$,
\begin{equation}\label{eq:ir12}
 r=2\Imn(F(u)\bar l+g\bar h).
\end{equation}
Equations \eqref{eq:besov}, \eqref{eq:Fstar} and Bernstein give
\[
 \|l\|_\infty\lesssim A_*\nu,\quad
 \|g\|_2\lesssim\nu^{\frac{3}{2}}F_*,\quad
 \|r\|_1\lesssim F_*(A_*\nu+\nu^{\frac{3}{2}}M_\nu)
                =F_*(A_*\nu+d_\nu\nu^{\frac{1}{2}}).
\]
Thus
\begin{equation}\label{eq:ir13}
 D\ge-C_{u,A,\kappa}(\nu^2+d_\nu^2\nu)a(t).
\end{equation}

The negative part of the radius term is therefore small relative to
$a(t)$. We next show that the modified kernel still gives a positive
multiple of $a(t)$. The portion of $Q_R$ retained in
\eqref{eq:static} serves two purposes: it absorbs the change in the
kernel and controls $a(t)$ at the chosen radius.

\begin{proposition}\label{prop:clock-positive}
Let $\Kcal,\eps_0,R_*$ be as in Lemma~\ref{lem:static}.
Fix $B,c_{\Kcal}>0$. Suppose $h\in H^1\setminus\{0\}$, $N>0$, $x_0\in\R^3$ satisfy
\[
 \inf_{v\in\Kcal}\|N^{-\frac{1}{2}}h(x_0+\frac{\cdot}{N})-v\|_{\dot H^1}<\eps_0,
 \quad\|\nabla h\|_2\le B,\quad a:=\|h\|_4^4\ge \frac{c_{\Kcal}}{N}.
\]
There exist $\kappa>0$, $A\ge2$, chosen in that order and uniformly
for this family, such that the radius in \eqref{eq:ir6} satisfies
\begin{equation}\label{eq:ir15}
 \BB_{k_{R,A}}(h)\ge ca+\mathcal P_{k_{R,A}}(h),\qquad c>0.
\end{equation}
\end{proposition}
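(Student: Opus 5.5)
Following \eqref{eq:density-to-quartic}, the plan is to prove three comparisons for the root $R$ of \eqref{eq:ir6}: (i) $\BB_{k_{R,A}}(h)\ge\frac{7}{24}Q_R(h)+\mathcal P_{k_{R,A}}(h)$, (ii) $\kappa R\lesssim A^2Q_R(h)$, and (iii) $R\ge La$. Multiplying (ii) and (iii) gives $Q_R\gtrsim \kappa R/A^2\ge \kappa La/A^2$, which yields \eqref{eq:ir15} with $c=\frac{7}{24}\kappa L/(CA^2)$. The parameters are fixed in order: first $L$, so that $R\ge La\ge Lc_{\Kcal}/N\ge R_*/N$. This makes the rescaled form of Lemma~\ref{lem:static} applicable at radius $R$ after translating by $x_0$ and rescaling by $N$; for the radius $AR$ it is automatic, since $A\ge2$. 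Second, $\kappa=\kappa(B,L)$ is chosen by Lemma~\ref{lem:clock-root}, uniformly in $A\ge2$. Third, $A$ is chosen large depending on these.

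\emph{Step (i).} The kernel is linear, $k_{R,A}=(a_R-A^{-2}a_{AR})/(1-A^{-2})$, and the phase identity gives $\BB_k(h)=\BB_k(f)+\mathcal P_k(h)$ with $f=|h|$. Hence
\[
 \BB_{k_{R,A}}(f)=\frac{\BB_{a_R}(f)-A^{-2}\BB_{a_{AR}}(f)}{1-A^{-2}}.
\]
Lemma~\ref{lem:static} bounds $\BB_{a_R}(f)\ge\frac{7}{12}Q_R$ from below. For the subtracted term I need an upper bound $\BB_{a_{AR}}(f)\lesssim_B R^{-1}+Q_R$. Using \eqref{eq:spherical}, $\BB_{a_{AR}}=\frac4\pi\int L_\omega$ with weight $H_{AR}$. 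I would bound $\int H\alpha\le\|H\|_\infty A(f)$, drop the negative potential term, and note $J_{AR}=Q$-type term with $Q_{AR}\le Q_R$ by monotonicity of the multiplier in \eqref{eq:Q}. This leaves $\|H_{AR}\|_\infty B^2$.

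That term is the main obstacle: $\|H_{AR}\|_\infty\le\|m\|_\infty$ involves mass, which is not controlled. I would instead bound it via Hardy, using $\sup_s m(s)/(AR)\lesssim\ldots$, or more simply $H_{AR}(s)=\int w_{AR}(s-s')m(s')\dd s'$ averaged over directions, which gives $\int\!\!\int \rho(y)|x-y|^{-1}e^{-|x-y|/AR}\ldots$. A cleaner route is to compare $A^{-2}\BB_{a_{AR}}$ directly with $\frac{7}{24}Q_R$ plus the positive term $c/R$ from \eqref{eq:static}, using $A^{-2}\|H_{AR}\|_\infty\le A^{-2}\cdot A\cdot\sup H_R$, since $w_{AR}\le w_R$ pointwise up to a factor. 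I expect this comparison of the weights $H_{AR}$ and $H_R$, with the $A^{-1}$ gain, to be the delicate point. After it, taking $A$ large absorbs the subtracted term into $\frac{7}{24}Q_R$ plus the retained $\frac{c}{R}$ and $\int H_R\alpha$ contributions from \eqref{eq:strongL}.

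\emph{Step (ii).} This follows from comparing multipliers. The root equation reads $\kappa R^4=R^3\int\Psi_A(R\xi)|\widehat\rho|^2$. From \eqref{eq:ir5}, $\Psi_A(\zeta)\lesssim A^2(1+4\pi^2|\zeta|^2)^{-1}$, since each integrand with $t\ge\delta$ is bounded by $t^{-3}(1+4\pi^2|\zeta|^2/t^2)^{-1}\le\delta^{-2}t^{-1}\ldots$. Integrating in $t$ gives a constant times $A^2/(1+|\zeta|^2)$. Comparing with \eqref{eq:Q} then yields $\kappa R\lesssim A^2Q_R$. This is routine; only the $A^2$ bookkeeping must be checked.
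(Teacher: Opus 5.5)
Your parameter order, step (iii), step (ii) and the final multiplication all agree with the paper; your bound on $\Psi_A$ is exactly \eqref{eq:ir19}. The gap is step (i).

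\textbf{The separate upper bound is false.} You say you need $\BB_{a_{AR}}(f)\lesssim_B R^{-1}+Q_R$, but no such bound holds without control of the mass. Add to the core a faint density $\rho\approx\epsilon/D$ on a ball of radius $D\gg AR$. This costs only $O(\epsilon)$ in $\|\nabla h\|_2^2$. Its interaction with the core gradient contributes a positive amount of order $\epsilon D$ to $\BB_{a_{AR}}(f)$, yet it adds only about $\epsilon^2D$ to $Q_R$. Hence the large parts of $\BB_{a_R}(f)$ and $A^{-2}\BB_{a_{AR}}(f)$ must cancel; they cannot be bounded separately.

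\textbf{The proposed comparison is also false.} You try to organize this cancellation through $w_{AR}\lesssim w_R$ "pointwise up to a factor". In fact $w_{AR}(s)/w_R(s)=A^{-1}e^{(1-A^{-1})|s|/R}$ is unbounded. Mass at distance of order $AR$ from the core therefore enters $H_{AR}(0)$ with a weight larger by a factor of about $e^{A}/A$ than in $H_R(0)$. Moreover, the terms retained in \eqref{eq:strongL} are $H_R(0)$, $Z$ and $J$. These control neither $\sup H_R$ nor $H_{AR}$ pointwise, so even a correct sup comparison would not be absorbed. Note also that Lemma~\ref{lem:static} at radius $AR$ gives a lower bound for $\BB_{a_{AR}}$, which is the wrong direction here.

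\textbf{The missing idea: work with the difference kernel.} The paper estimates $b=k_{R,A}-a_R=\frac{A^{-2}(a_R-a_{AR})}{1-A^{-2}}$ directly, because the linear growth cancels in it. At $R=1$ one has $|D^2b|\lesssim\min\{A^{-2},A^{-2}r^{-1},r^{-3}\}$ and $|D^3b|\lesssim\min\{A^{-2},A^{-2}r^{-2},r^{-4}\}$. Hence $\|D^2b\|_2+\|\Delta b\|_2\lesssim A^{-3/2}R^{1/2}$ and $\|D^3b\|_2\lesssim A^{-2}R^{-1/2}$. Cauchy--Schwarz in Fourier space, against the weight $(1+4\pi^2R^2|\xi|^2)^{-1}$ from \eqref{eq:Q}, then gives a bound that does not involve the mass:
$\||D^2b*\rho|\|_\infty+\|\Delta b*\rho\|_\infty\lesssim A^{-3/2}R^{1/2}Q_R(h)^{1/2}$.
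Since $Q_{k_{R,A}}\ge Q_R$, this yields
$\BB_{k_{R,A}}(f)\ge\frac{7}{12}Q_R-CA^{-3/2}R^{1/2}Q_R^{1/2}(B^2+B^6)$.

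\textbf{Why (ii) is needed inside (i).} The error above is not small compared with $Q_R$ for an arbitrary radius $R\ge R_*/N$. It becomes small only at the root. There your step (ii), $\kappa R\lesssim A^2Q_R$, turns it into $CA^{-1/2}\kappa^{-1/2}(B^2+B^6)Q_R$, which is at most $\frac{7}{24}Q_R$ once $A$ is large. So (i) is a statement about the root radius, not about every admissible $R$. This absorption is also where the dependence of $A$ on the previously fixed $\kappa$ comes from.
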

\begin{proof}
Fix $L\ge \frac{R_*}{c_{\Kcal}}$. Lemma~\ref{lem:clock-root}
allows us to choose $\kappa$, independently of $A\ge2$, so that
\[
 R\ge La\ge \frac{R_*}{N}.
\]
The scaled form of Lemma~\ref{lem:static} gives
\begin{equation}\label{eq:ir14}
 \BB_{a_R}(|h|)\ge\frac{7}{12}Q_R(h).
\end{equation}
Put $b=k_{R,A}-a_R$. Since
\[
 a_R(r)=\frac{r^2}{3R}-\frac{r^3}{12R^2}+O(r^4),\qquad D^3a_R\in L^\infty_{\mathrm{loc}},
\]
weak derivatives suffice at zero. For $R=1$,
$b=\frac{A^{-2}(a_1-a_A)}{1-A^{-2}}$ and the common linear terms cancel.
Differentiating in the regions $0<r\le1$, $1<r<A$, and $r\ge A$ gives
\[
 |D^2b|\lesssim\min\{A^{-2},A^{-2}r^{-1},r^{-3}\},\qquad
 |D^3b|\lesssim\min\{A^{-2},A^{-2}r^{-2},r^{-4}\}.
\]
Integration in polar coordinates and rescaling yield
\begin{equation}\label{eq:ir16}
 \|D^2b\|_2+\|\Delta b\|_2\lesssim A^{-\frac{3}{2}}R^{\frac{1}{2}},\qquad
 \|D^3b\|_2+\|\nabla\Delta b\|_2\lesssim A^{-2}R^{-\frac{1}{2}}.
\end{equation}
For each Hessian component, Cauchy--Schwarz in Fourier space gives
\begin{align*}
 \|b_{jk}*\rho\|_\infty
 &\lesssim\left(\int(1+4\pi^2R^2|\xi|^2)|\widehat b_{jk}|^2\dd\xi\right)^{\frac{1}{2}}
           \left(\int\frac{|\widehat\rho|^2}{1+4\pi^2R^2|\xi|^2}\dd\xi\right)^{\frac{1}{2}}\\
 &\lesssim(\|b_{jk}\|_2+R\|\nabla b_{jk}\|_2)Q_R(h)^{\frac{1}{2}}.
\end{align*}
Combining this with the analogous estimate for $\Delta b$, we obtain
\begin{equation}\label{eq:ir17}
 \||D^2b*\rho|\|_\infty+\|\Delta b*\rho\|_\infty
           \lesssim A^{-\frac{3}{2}}R^{\frac{1}{2}}Q_R(h)^{\frac{1}{2}}.
\end{equation}
Here the matrix norm is taken after convolution. For $f=|h|$,
$p_f=0$ and $Q_{k_{R,A}}\ge Q_R$, so
\begin{equation}\label{eq:ir18}
 \BB_{k_{R,A}}(f)\ge\BB_{a_R}(f)
       -CA^{-\frac{3}{2}}R^{\frac{1}{2}}Q_R(h)^{\frac{1}{2}}(B^2+B^6).
\end{equation}
At the root, \eqref{eq:ir5} gives
\begin{equation}\label{eq:ir19}
 \Psi_A(s)\lesssim\frac{A^2}{1+4\pi^2s^2},\qquad
 \kappa R=\int\Psi_A(R\xi)|\widehat\rho(\xi)|^2\dd\xi
                              \lesssim A^2Q_R(h).
\end{equation}
With $\kappa$ fixed, we may now choose $A$ sufficiently large that
\begin{gather*}
 CA^{-\frac{1}{2}}\kappa^{-\frac{1}{2}}(B^2+B^6)\le\frac{7}{24},\\
 \BB_{k_{R,A}}(f)\ge\frac{7}{24}Q_R(h),\qquad
 Q_R(h)\gtrsim\kappa A^{-2}R\ge\kappa LA^{-2}a.
\end{gather*}
The phase identity proves \eqref{eq:ir15}.
\end{proof}

We apply Lemma~\ref{lem:static} and Proposition~\ref{prop:clock-positive}
to the compact closure $\overline{\Kcal}$ of the orbit in
\eqref{eq:compact}. With $\overline{\Kcal},\eps_0,R_*,B,c_{\Kcal}$
fixed, we choose $L$, then $\kappa$, and finally $A$.
All constants in the radius and coercivity estimates are thus fixed
before $\nu$ is chosen. Equations \eqref{eq:dsmall} and
\eqref{eq:a-bounds} put the normalized profiles of $h$ in the required
neighborhood for every sufficiently small $\nu$. For the same kernel and radius,
we have
\[
 \BB_{k_{R(t),A}}(h(t))\ge c_u\|h(t)\|_4^4,\qquad
 R'(t)M_\psi(h(t))\ge-C_u(\nu^2+d_\nu^2\nu)\|h(t)\|_4^4.
\]
The source estimate will determine how small $\nu$ must be; we then
keep $\nu$ fixed as $T_j\to\infty$.

\section{The frequency-localized interaction Morawetz estimate}\label{sec:interaction}
We complete the proof of Proposition~\ref{prop:morawetz}. The preceding
section provides the positive principal term and the lower bound for
the radius contribution. Using the bounds from Section~\ref{sec:lp}, we estimate the remaining source,
then integrate the identity and absorb the terms with small coefficients into
the quartic spacetime integral.

\subsection{The projected quintic source}
We retain the notation in \eqref{eq:source-identity}. Since the source
estimates use only \eqref{eq:kernel-source-bounds} and involve no time
derivatives of $k$, they apply to the kernel and radius constructed
in Section~\ref{sec:static}, with constants independent of their
time dependence.

\begin{proposition}\label{prop:full-source}
For the selected intervals $I_j=[0,T_j]$,
\begin{equation}\label{eq:source-final}
 \left|\int_{I_j}\mathrm{Err}_{k,\nu}\dd t\right|
 \le C_{u,C_k}\beta_\nu[\nu^{-3}+K_a(I_j)],\qquad
 \beta_\nu=\nu^{\frac{1}{4}}+d_\nu+d_\nu^2\to0.
\end{equation}
The constant is uniform over all measurable time-dependent kernels
satisfying \eqref{eq:kernel-source-bounds} with the same $C_k$.
\end{proposition}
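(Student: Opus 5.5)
We split $\mathcal E=G_1-g$ and estimate $\mathrm{Err}_{G_1}$ and $\mathrm{Err}_{g}$ separately, keeping the bracket structure of \eqref{eq:source-identity}. The tools are: the pointwise low-frequency bounds \eqref{eq:deep-low-H}, namely $\|l\|_\infty\lesssim\nu F_+$ and $\|\nabla l\|_\infty\lesssim\nu^2F_+$; the comparison \eqref{eq:good-time}, which turns $\int_{I_j}(f^2+F_+^2)$ into $K_a(I_j)$; and the kernel bounds \eqref{eq:kernel-source-bounds}. The latter give $\|q\|_\infty\le C_kM_\nu^2$ and $\|q_m\|_\infty\le C_kM_\nu\|\nabla h\|_2\lesssim_u M_\nu$. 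These estimates are uniform in the time dependence of $k$, since no time derivative of $k$ enters.

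\emph{The $G_1$ part.} For the mass bracket, $\Imn(F(h)\bar h)=0$ gives $\{G_1,h\}_m=\Imn(F(u)\bar h)=-\Imn(F(u)\bar l)$. Hence
\[
 \Bigl|\int q_m\{G_1,h\}_m\Bigr|\lesssim_u M_\nu\,\|l\|_\infty f\lesssim d_\nu F_+f.
\]
For the momentum bracket, write $\{z,h\}_p=\Ren(z\nabla\bar h-h\nabla\bar z)=2\Ren(z\nabla\bar h)-\nabla\Ren(h\bar z)$. The combination $F(u)\nabla\bar u$ is a gradient, $\Ren(F(u)\nabla\bar u)=\frac16\nabla|u|^6$, and the same holds for $h$. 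Writing $\nabla\bar h=\nabla\bar u-\nabla\bar l$, we get
\[
 2\Ren(G_1\nabla\bar h)=\tfrac13\nabla(|u|^6-|h|^6)-2\Ren(F(u)\nabla\bar l)+2\Ren(F(h)\nabla\bar l).
\]
Integrating the gradient terms against $q$ moves the derivative onto $q$, and $|\nabla q|\lesssim \int|x-y|^{-1}\rho(y)\,dy\lesssim M_\nu\|\nabla h\|_2$ by the translated Hardy inequality. The potential differences $|u|^6-|h|^6$ and $\Ren(h\bar G_1)$ each contain at least one factor of $l$. We bound that factor in $L^\infty$ by $\nu F_+$ and the remaining factors in $L^1$ by $f$, or by $\|h\|_5^5\lesssim a^{1/2}$. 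The terms containing $\nabla l$ are bounded directly by $\|q\|_\infty\nu^2F_+(f+a^{1/2})\lesssim d_\nu^2F_+(f+a^{1/2})$. After Cauchy--Schwarz in time and \eqref{eq:good-time}, this yields $|\int_{I_j}\mathrm{Err}_{G_1}|\lesssim_u(d_\nu+d_\nu^2)K_a(I_j)$.

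\emph{The $g$ part.} This is the main obstacle. Here $g=P_{\le\nu}F(u)$ can receive high-high interactions, so we get no smallness from $l$. Crude Bernstein bounds give $\|g\|_\infty\lesssim\nu^3f$ and $\|\nabla g\|_\infty\lesssim\nu^4f$. Paired with $\|q_m\|_\infty\lesssim M_\nu$ and $\|h\|_1$, these fail, because $h\notin L^1$. The plan is to write $h=\Div(\nabla\Delta^{-1}h)$ in the brackets $\Imn(g\bar h)$ and $\Ren(g\nabla\bar h-h\nabla\bar g)$ and integrate by parts. The derivative then lands on $g$, gaining $\nu$, or on $q_m$ and $q$, where \eqref{eq:kernel-source-bounds} with the $|x||D^2k|$ and $|x|^2|D^3k|$ bounds, combined with Hardy, is used. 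This leaves terms of the form $\int|\nabla\Delta^{-1}h|\,|\nabla g|\,|q_m|$, among others. We control $\nabla\Delta^{-1}P_{>\nu}u$ in $L^2_tL^6_x$-type norms on $I_j$ by the long-time Strichartz estimate of Killip--Visan \cite{KVquintic}, which is valid for the focusing sign and bounds these norms by roughly $\nu^{-1}(1+\nu^{-3}+K_N(I_j))^{1/2}$, i.e. by $\nu^{-1}$ times a power of $(1+\int_{I_j}N^{-1})$. We then pair this with $\|g\|_{L^2_tL^{6/5}}$-type bounds controlled by $\nu^{a}\|f\|_{L^2(I_j)}$ through Bernstein, and use $K_N\lesssim_u K_a$ from \eqref{eq:a-bounds}. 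Balancing exponents, the net power of $\nu$ should come out as a gain of $\nu^{1/4}$ relative to $\nu^{-3}+K_a(I_j)$, giving the $\nu^{1/4}$ term in $\beta_\nu$. Adding the two parts proves \eqref{eq:source-final}. The delicate point is the exponent bookkeeping in the $g$ term, specifically matching the long-time Strichartz scaling against the $\nu^{-3}$ normalization. If a direct pairing loses, I would first try splitting $g$ into its $P_{\le\nu}$ output from $P_{\gtrsim\nu}$ inputs versus the remaining part, and apply the long-time Strichartz estimate only to the former.
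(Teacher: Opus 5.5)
\textbf{The $G_1$ part is fine; the $g$ part has a genuine gap.} Your treatment of $\mathrm{Err}_{G_1}$ is the paper's argument: the mass cancellation $\Imn(G_1\bar h)=-\Imn(F(u)\bar l)$, the gradient identity $\Ren(F(v)\nabla\bar v)=\frac16\nabla|v|^6$, integration by parts onto $\Div q$, and then \eqref{eq:deep-low-H} and \eqref{eq:good-time}. Your displayed formula for $2\Ren(G_1\nabla\bar h)$ has a spurious term $+2\Ren(F(h)\nabla\bar l)$, because $2\Ren(F(h)\nabla\bar h)=\frac13\nabla|h|^6$ exactly. This slip is harmless, since that term would be bounded like the others. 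The gap lies in the $g$ part, in exactly the bookkeeping you leave open. With the tools you specify, that estimate does not close.

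\emph{Your bound for $Z=\nabla\Delta^{-1}h$ is too weak.} Sum \eqref{eq:lts} dyadically over $L\gtrsim\nu$ with weights $L^{-2}$. With $X=\nu^{-3}+K_a(I_j)$, this gives $\|Z\|_{L^2_tL^6_x(I_j)}\lesssim_u\nu^{-2}+\nu^{-1/2}K_N(I_j)^{1/2}\lesssim_u\nu^{-1/2}X^{1/2}$. With your $\nu^{-1}X^{1/2}$, even the benign term $\int|Z\,q_m\nabla g|$ is only bounded by $\nu^{-1}\cdot\nu^{3/2}M_\nu=d_\nu\nu^{-1/2}$ times $\|f\|_{L^2(I_j)}X^{1/2}$. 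That coefficient need not be small, since $d_\nu\to0$ at no rate; for $W$-like profiles $d_\nu\sim\nu^{1/2}$.

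\emph{Hardy's inequality on the weights gives no smallness.} This is the more serious problem. Even with the sharp $Z$ bound, you propose to control the differentiated weights by Hardy's inequality and to place $g$ in $L^2_tL^{6/5}_x$. This gains nothing in the two terms where the derivative from $h=\Div Z$ lands on the weight: $\int\overline{Z_k}\,g\,\partial_kq_m$ and $\int\overline{Z_k}(\partial_k\Div q)\,g$. Hardy gives $\|\nabla q_m\|_\infty+\|\nabla\Div q\|_\infty\lesssim_{C_k}B^2$, with no mass factor. These terms are therefore bounded only by $\nu^{-1/2}\cdot\nu^{1/2}B^2\|f\|_{L^2(I_j)}X^{1/2}\lesssim_u X$, which cannot be absorbed.

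\emph{The missing ingredient} is the $L^4$ bound $U$ from Lemma~\ref{lem:l4}, used with Hardy--Littlewood--Sobolev instead of Hardy. Since $|D^2k|\lesssim|x|^{-1}$ and $|D^3k|\lesssim|x|^{-2}$, you get $\|\nabla q_m\|_{12}\lesssim\|h\nabla h\|_{4/3}\lesssim UB$ and $\|\nabla\Div q\|_{12/5}\lesssim\||h|^2\|_{4/3}\lesssim M_\nu U$. You can then put $g$ in $L^{4/3}_x$ and $L^{12/5}_x$, where Bernstein gives $\nu^{3/4}$ and $\nu^{7/4}$. Against $\nu^{-1/2}X^{1/2}$ this yields the coefficients $\nu^{1/4}$ and $d_\nu\nu^{1/4}$. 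This is exactly where the $\nu^{1/4}$ in $\beta_\nu$ comes from. Your fallback of splitting $g$ by input frequencies does not address the issue, because the obstruction is the integrability of the weights, not the structure of $g$.
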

\begin{proof}
Throughout this proof, implicit constants may depend on $u$ and $C_k$,
but are independent of $\nu$, $j$, and the choice of $k$ with the same $C_k$.
Hardy's inequality, Cauchy--Schwarz, and the
Hardy--Littlewood--Sobolev inequality give
\begin{align}\label{eq:qbounds}
 \|q\|_\infty&\lesssim M_\nu^2,&
 \|\Div q\|_\infty+\|q_m\|_\infty&\lesssim M_\nu B,\notag\\
 \|\nabla q_m\|_{12}&\lesssim UB,&
 \|Dq\|_{12}+\|\Div q\|_{12}+\|\nabla\Div q\|_{\frac{12}{5}}
                                      &\lesssim M_\nu U.
\end{align}
The last line follows from the Hardy--Littlewood--Sobolev inequality and
\[
 \||h|^2\|_{\frac{4}{3}}\lesssim M_\nu U,\qquad
 \|h\nabla h\|_{\frac{4}{3}}\lesssim UB.
\]
Write $\mathrm{Err}_{G_1}$ and $\mathrm{Err}_g$ for
\eqref{eq:source-identity} with $\mathcal E$ replaced by $G_1$ and $g$,
respectively, so that $\mathrm{Err}_{k,\nu}=\mathrm{Err}_{G_1}-\mathrm{Err}_g$.
For the $G_1$ contribution, the key cancellations are
\begin{equation}\label{eq:mass-cancel}
 \Imn(G_1\bar h)=-\Imn(F(u)\bar l),
\end{equation}
\begin{align}\label{eq:Gcancel}
 \int q\cdot\{G_1,h\}_p
 ={}&\int\Div q\left[\frac{2}{3}(|u|^6-|h|^6)
                         -\Ren(l\overline{F(u)})\right]\notag\\
 &-2\int q\cdot\Ren(F(u)\nabla\bar l).
\end{align}
The second identity follows by integration by parts from
\[
 \{F(v),v\}_p=-\frac{2}{3}\nabla|v|^6,\qquad
 \{G_1,h\}_p=\{F(u),u\}_p-\{F(h),h\}_p-\{F(u),l\}_p,
\]
\[
 \{F(u),l\}_p=2\Ren(F(u)\nabla\bar l)
                         -\nabla\Ren(l\overline{F(u)}).
\]
The $L^5$ multiplier bound and \eqref{eq:deep-low-H} imply
\begin{gather*}
 \|F(h)\|_1\lesssim f(t),\qquad
 \left\|\frac{2}{3}(|u|^6-|h|^6)-\Ren(l\overline{F(u)})\right\|_1
                         \lesssim\|l\|_\infty f(t)\lesssim\nu F_+(t)f(t),\\
 |\mathrm{Err}_{G_1}(t)|
 \lesssim_{u,C_k}(M_\nu\nu+M_\nu^2\nu^2)f(t)F_+(t)
              =(d_\nu+d_\nu^2)f(t)F_+(t).
\end{gather*}
Thus \eqref{eq:good-time} gives
\[
 \left|\int_{I_j}\mathrm{Err}_{G_1}\right|
                  \lesssim_{u,C_k}(d_\nu+d_\nu^2)K_a(I_j).
\]

To estimate $g=P_{\le\nu}F(u)$, we write the high-frequency
factor $h$ as a divergence, allowing derivatives to fall on $g$
and the interaction weights. Set $Z=\nabla\Delta^{-1}h$, so that
$\Div Z=h$, and write $X=\nu^{-3}+K_a(I_j)$.
The required bound follows from the long-time estimate
\cite[Theorem 4.1, (4.5), and Remark 4.3]{KVquintic}. Its proof uses
Strichartz estimates and compactness, without the Morawetz identity,
and applies to the focusing sign:
\begin{equation}\label{eq:lts}
 \|\nabla P_{\le L}u\|_{L^2_tL^6_x(I)}
 \lesssim_u1+L^{\frac{3}{2}}K_N(I)^{\frac{1}{2}},\qquad I\Subset I_{\max}.
\end{equation}
Square-function summation gives the estimate for $P_{\le L}u$. For an arbitrary
compact interval $I$, we separate off its at most two partial
characteristic intervals; each contributes $O_u(1)$ by local theory. Thus \eqref{eq:lts} applies
to $I_j$, and \eqref{eq:a-bounds} yields
\begin{align}\label{eq:Z}
 \|Z\|_{L^2_tL^6_x(I_j)}
 &\lesssim_u\sum_{L\gtrsim\nu}L^{-2}
                      (1+L^{\frac{3}{2}}K_N(I_j)^{\frac{1}{2}})\notag\\
 &\lesssim_u\nu^{-2}+\nu^{-\frac{1}{2}}K_N(I_j)^{\frac{1}{2}}
 \lesssim_u\nu^{-\frac{1}{2}}X^{\frac{1}{2}}.
\end{align}
Bernstein gives the bounds
\begin{equation}\label{eq:bernstein-source}
 \|D^m g\|_{L^2_tL^p_x(I_j)}
 \lesssim\nu^{m+3(1-\frac{1}{p})}\|f\|_{L^2(I_j)},
 \qquad m=0,1,2,\quad 1\le p\le\infty.
\end{equation}
Using $h=\Div Z$, we integrate by parts to obtain
\begin{align*}
 \int q_m\{g,h\}_m
 &=-\Imn\int\overline{Z_k}[g\partial_kq_m+q_m\partial_kg],\\
 \int q\cdot\{g,h\}_p
 &=-\Ren\int\bar h[(\Div q)g+2q_j\partial_jg]\\
 &=\Ren\int\overline{Z_k}\big[
 (\partial_k\Div q)g+(\Div q)\partial_kg
             +2(\partial_kq_j)\partial_jg+2q_j\partial_k\partial_jg\big].
\end{align*}
A density argument justifies these integrations by parts. The preceding
bounds place $q_mg$ and $(\Div q)g+2q\cdot\nabla g$ in
$L_t^2W_x^{1,\frac{6}{5}}(I_j)$. We can therefore pair them with
$h=\Div Z$, where $Z\in L_t^2L_x^6(I_j)$.
For example, \eqref{eq:qbounds}, \eqref{eq:Z}, and
\eqref{eq:bernstein-source} give
\begin{align*}
 \int_{I_j}\int |Zg\nabla q_m|\dd x\dd t
 &\le \|Z\|_{L^2_tL^6_x(I_j)}
       \|g\|_{L^2_tL^{\frac{4}{3}}_x(I_j)}
       \|\nabla q_m\|_{L^\infty_tL^{12}_x(I_j)}\\
 &\lesssim_{u,C_k}\nu^{\frac{1}{4}}
                  \|f\|_{L^2(I_j)}X^{\frac{1}{2}}.
\end{align*}
Writing $Y=\|f\|_{L^2(I_j)}X^{\frac{1}{2}}$, we estimate the remaining terms
similarly:
\begin{align*}
 \int_{I_j}\int|Z\nabla g\,q_m|
 &\lesssim_{u,C_k}\nu^{-\frac{1}{2}}\nu^{\frac{3}{2}}M_\nu Y=d_\nu Y,\\
 \int_{I_j}\int|Zg\nabla\Div q|
 &\lesssim_{u,C_k}\nu^{-\frac{1}{2}}\nu^{\frac{7}{4}}M_\nu Y
                         =d_\nu\nu^{\frac{1}{4}}Y,\\
 \int_{I_j}\int|Z\nabla g|(|\Div q|+|Dq|)
 &\lesssim_{u,C_k}\nu^{-\frac{1}{2}}\nu^{\frac{7}{4}}M_\nu Y
                         =d_\nu\nu^{\frac{1}{4}}Y,\\
 \int_{I_j}\int|ZD^2g\,q|
 &\lesssim_{u,C_k}\nu^{-\frac{1}{2}}\nu^{\frac{5}{2}}M_\nu^2Y=d_\nu^2Y.
\end{align*}
Since $\nu\le1$ and \eqref{eq:good-time} gives
\[
 Y\lesssim_u K_a(I_j)^{\frac{1}{2}}X^{\frac{1}{2}}\le C_uX,
\]
these bounds and the estimate for $G_1$ prove \eqref{eq:source-final}.
\end{proof}

\subsection{Proof of the Morawetz estimate}
\begin{proof}[Proof of Proposition~\ref{prop:morawetz}]
We first justify \eqref{eq:full-identity} at the regularity of $h$.
Fix $\nu>0$ sufficiently small that \eqref{eq:a-bounds} and
Proposition~\ref{prop:clock-positive} apply, and let $I\subset[0,\infty)$
be a compact interval. In particular, $a(t)\ge \frac{c_{\Kcal}}{N(t)}>0$ on $I$,
so the radius is defined throughout $I$. We use the radius determined
by the original density for all approximations and pass to the limit
before applying the coercivity and source estimates. Local theory gives
\begin{gather*}
 h\in C(I;H^1)\cap L^2(I;W^{1,6}),\qquad
 \mathcal F:=P_{>\nu}F(u)\in L^2(I;W^{1,\frac{6}{5}}),\\
 \|\nabla F(u)\|_{L^2L^{\frac{6}{5}}}
 \le5\|u\|_{L^\infty L^6}^4\|\nabla u\|_{L^2L^6},\qquad
 \|\mathcal F\|_{L^2L^{\frac{6}{5}}}\lesssim\nu^{-1}
                         \|\nabla F(u)\|_{L^2L^{\frac{6}{5}}}.
\end{gather*}
For a spatial approximate identity $J_\epsilon$, set
\begin{gather*}
 h_\epsilon=J_\epsilon h,\qquad\mathcal F_\epsilon=J_\epsilon\mathcal F,
 \qquad\mathcal E_\epsilon=\mathcal F_\epsilon-F(h_\epsilon),\\
 (\ii\partial_t+\Delta)h_\epsilon=-F(h_\epsilon)-\mathcal E_\epsilon.
\end{gather*}
The quintic product rule implies
\begin{gather*}
 h_\epsilon\to h\text{ in }C(I;H^1)\cap L^2(I;W^{1,6}),\\
 \|F(h_\epsilon)-F(h)\|_{L^2W^{1,\frac{6}{5}}}
 \le C_I\big(\|h_\epsilon-h\|_{L^2W^{1,6}}
                       +\|h_\epsilon-h\|_{L^\infty H^1}\big)\to0,\\
 \mathcal E_\epsilon\to\mathcal E\text{ in }L^2(I;W^{1,\frac{6}{5}}),\quad
 \rho_\epsilon\to\rho\text{ in }C(I;L^1\cap L^2),\quad
 p_\epsilon\to p\text{ in }C(I;L^1).
\end{gather*}
This same radius satisfies $R\in W^{1,\infty}(I)$ and
$R(t)\in[R_-,R_+]\subset(0,\infty)$. Moreover,
\[
 \|q_\epsilon-q\|_{L^\infty_{t,x}}
        \lesssim\|\rho_\epsilon-\rho\|_{L^\infty L^1}\to0,\qquad
 \|q_{m,\epsilon}-q_m\|_{L^\infty_{t,x}}
        \lesssim\|p_\epsilon-p\|_{L^\infty L^1}\to0.
\]
H\"older's inequality therefore gives
\[
 \mathrm{Err}^{(\epsilon)}_{k,\nu}\to\mathrm{Err}_{k,\nu}
                                        \text{ in }L^1(I).
\]
On this radius interval,
\[
 \sup_t(\|D^2k_{R(t),A}\|_\infty+
          \|\partial_R\nabla k_{R(t),A}\|_\infty)<\infty,
\]
so the gradient, current, potential and radius terms converge. For
the density term, \eqref{eq:ir2} gives
\[
 |Q_{k(t)}(h_\epsilon)-Q_{k(t)}(h)|
 \lesssim_A(\|\rho_\epsilon\|_2+\|\rho\|_2)
                          \|\rho_\epsilon-\rho\|_2\to0
\]
uniformly in $t\in I$. Therefore
\begin{gather*}
 \BB_{k(t)}(h_\epsilon)\to\BB_{k(t)}(h),\qquad
 R'M_\psi(h_\epsilon)\to R'M_\psi(h)\qquad\text{in }L^1(I),\\
 M_{k(t)}(h(t))-M_{k(s)}(h(s))
 =\int_s^t[\BB_{k(\tau)}(h)+\mathrm{Err}_{k(\tau),\nu}
                                    +R'M_\psi(h)]\dd\tau.
\end{gather*}
Spatial cutoffs and kernel smoothing justify the integrations by parts.
Removing both regularizations using the displayed bounds, we obtain the
integrated identity on every compact interval in $[0,\infty)$ for the original
$h$ and its radius $R(t)$.

We now integrate on the intervals $I_j=[0,T_j]$ from
Proposition~\ref{prop:good-time}. The parameters $L,\kappa,A$ were fixed
in Section~\ref{sec:static}, independently of $\nu$ and $j$.
Combining Proposition~\ref{prop:clock-positive}, \eqref{eq:ir13},
and the endpoint bound
\begin{equation}\label{eq:endpoint}
 |M_{k_{R(t),A}}(h(t))|
 \le2\|h(t)\|_2^3\|\nabla h(t)\|_2\lesssim_u\nu^{-3}
\end{equation}
with Proposition~\ref{prop:full-source}, we obtain
\begin{align}\label{eq:closure}
 c_uK_a(I_j)
 &\le [M_{k_{R(t),A}}(h(t))]_{t=0}^{T_j}
       -\int_{I_j}\mathrm{Err}_{k_{R(t),A},\nu}\dd t
       -\int_{I_j}D(t)\dd t\notag\\
 &\le C_u\nu^{-3}
      +C_u\beta_\nu[\nu^{-3}+K_a(I_j)]
      +C_u(\nu^2+d_\nu^2\nu)K_a(I_j),
 \qquad \beta_\nu=\nu^{\frac{1}{4}}+d_\nu+d_\nu^2.
\end{align}
Thus \eqref{eq:morawetz-target} holds with
$\varepsilon_u(\nu)=C'_u(\beta_\nu+\nu^2+d_\nu^2\nu)\to0$,
after enlarging the constant multiplying $\nu^{-3}$.
Choose $\nu_0>0$ so that $\varepsilon_u(\nu)\le\frac{1}{2}$ for
$0<\nu\le\nu_0$. For each such fixed $\nu$, absorption and monotone
convergence give the required spacetime estimate
\[
 \int_0^\infty\!\int_{\R^3}|P_{>\nu}u(t,x)|^4\dd x\dd t
 =\lim_{j\to\infty}K_a([0,T_j])\le 2C_u\nu^{-3}.
\]
This proves \eqref{eq:morawetz-global}.
\end{proof}

The spacetime estimate is now available. We finish by proving the
rigidity lemma and applying the finite-interval forcing comparison.

\begin{proof}[Proof of Lemma~\ref{lem:forcing-rigidity}]
Put $f(t)=\|F(u(t))\|_1$.
For fixed $n>0$, dual endpoint Strichartz \cite{KT} and Bernstein give
\begin{equation}\label{eq:tail-mass}
 V_{t,T}^{(n)}:=\int_t^Te^{\ii(t-s)\Delta}P_{\le n}F(u(s))\dd s,
 \qquad\|V_{t,T}^{(n)}\|_2\lesssim n^{\frac{1}{2}}\|f\|_{L^2(t,T)}.
\end{equation}
Since $f\in L^2(0,\infty)$, these integrals are Cauchy in $L^2_x$
as $T\to\infty$:
\[
 \|V_{t,T'}^{(n)}-V_{t,T}^{(n)}\|_2
       \lesssim n^{\frac{1}{2}}\|f\|_{L^2(T,T')}\to0.
\]
Their Fourier supports lie in $B(0,2n)$, so they converge to a limit
$V_t^{(n)}$ in both $L^2$ and $\dot H^1$. The no-waste formula
\eqref{eq:no-waste} identifies this limit: $P_{\le n}u(t)=-\ii V_t^{(n)}$.
Combining this with
$\|P_{>n}u(t)\|_2\lesssim n^{-1}\|\nabla u(t)\|_2\le\frac{B}{n}$
and $u=P_{\le n}u+P_{>n}u$ proves \eqref{eq:mass-zero}.
Taking $n=1$ first establishes finite mass:
\[
 \sup_{t\ge0}\|u(t)\|_2\le C\|f\|_{L^2(0,\infty)}+CB<\infty.
\]
The local $H^1$ theory and uniqueness in $\dot H^1$ now imply that
this solution conserves mass. Write $m=\|u(t)\|_2$. For each fixed $n>0$, letting $t\to\infty$ in
\eqref{eq:mass-zero} gives $m\le \frac{CB}{n}$.
Since $n$ is arbitrary, letting $n\to\infty$ gives $m=0$.
\end{proof}

\begin{proof}[Proof of Theorem~\ref{thm:main}]
Suppose the theorem fails. We choose the nonzero critical element $u$ from
Proposition~\ref{prop:reduction} and fix $\nu$ as in
Proposition~\ref{prop:morawetz}. The finite-interval comparison in
Proposition~\ref{prop:good-time} and \eqref{eq:morawetz-global} imply
\begin{equation}\label{eq:global-forcing}
 \int_0^\infty\|F(u(t))\|_1^2\dd t
 \lesssim_u\int_0^\infty\!\int_{\R^3}|P_{>\nu}u(t,x)|^4\dd x\dd t
 \lesssim_u\nu^{-3}<\infty.
\end{equation}
Lemma~\ref{lem:forcing-rigidity} gives $u=0$, a contradiction.
The reduction and the standard scattering criterion
\cite[Section 2]{KM} now yield global existence, uniqueness and
scattering in both time directions for every datum satisfying
\eqref{eq:threshold}.
\end{proof}
\section*{Acknowledgments}
Z. Zhao was supported by the National Key R\&D Program of China
(grant 2025YFA1018500), the National Natural Science Foundation of China
(grants 12426205 and 12271032), and the Beijing Natural Science Foundation
(grant 1262019). Further support was provided by the Beijing Institute of
Technology Research Fund Program for Young Scholars.\vspace{1mm}

\paragraph{Data availability.}
No datasets were generated or analyzed for this article.\vspace{1mm}

\paragraph{Competing interests.}
The authors declare no competing interests.\vspace{1mm}

\paragraph{Statement on AI usage.}  The authors used ChatGPT(OpenAI) for language editing, LaTeX formatting, bibliographic
checks, and consistency checks during the preparation of the
manuscript. The authors critically reviewed and revised all
content and take full responsibility for the manuscript.

\enlargethispage{\baselineskip}

\end{document}